\DeclareMathOperator*{\argmax}{argmax}
\DeclareMathOperator*{\argmin}{argmin}
\def\real{\mathbb{R}}
\def\vspecdots{\vbox{\baselineskip=2pt \lineskiplimit=0pt
		\kern2pt \hbox{.}\hbox{.}\hbox{.}}}
\newcommand{\until}[1]{\{1,\dots, #1\}}
\newcommand{\kron}{\operatorname{\otimes}}
\DeclareMathOperator{\vecz}{vec}
\newcommand\oprocendsymbol{\hbox{$\square$}}
\newcommand\oprocend{\relax\ifmmode\else\unskip\hfill\fi\oprocendsymbol}
\DeclareSymbolFont{bbold}{U}{bbold}{m}{n}
\DeclareSymbolFontAlphabet{\mathbbold}{bbold}
\newtheorem{theorem}{Theorem}
\newtheorem{lemma}[theorem]{Lemma}
\newtheorem{remark}[theorem]{Remark}
\newtheorem{problem}{Problem}
\newtheorem{conjecture}{Conjecture}
\title{\LARGE \bf
Stochastic Strategies for Robotic Surveillance \\as Stackelberg Games
}
\author{Xiaoming Duan, Dario Paccagnan, and Francesco Bullo,~\IEEEmembership{Fellow,~IEEE}
\thanks{This work has been supported in part by Air Force Office of Scientific
  Research award FA9550-15-1-0138, and in part by the Swiss National Science
  Foundation under grant number P2EZP2-181618.}
\thanks{Xiaoming Duan, Dario Paccagnan, and Francesco Bullo are with the Mechanical
	Engineering Department and the Center of Control, Dynamical
	Systems and Computation, UC Santa Barbara, CA 93106-5070, USA.
	{\tt\small \{xiaomingduan,dariop,bullo\}@ucsb.edu}}%
}
\begin{document}

\maketitle
\thispagestyle{empty}
\pagestyle{empty}

\begin{abstract}
This paper studies a stochastic robotic surveillance problem where a mobile robot moves randomly on a graph to capture a potential intruder that strategically attacks a location on the graph. The intruder is assumed to be omniscient: it knows the current location of the mobile agent and can learn the surveillance strategy. The goal for the mobile robot is to design a stochastic strategy so as to maximize the probability of capturing the intruder. We model the strategic interactions between the surveillance robot and the intruder as a Stackelberg game, and optimal and suboptimal Markov chain based surveillance strategies in star, complete and line graphs are studied. We first derive a universal upper bound on the capture probability, i.e., the performance limit for the surveillance agent. We show that this upper bound is tight in the complete graph and further provide suboptimality guarantees for a natural design. For the star and line graphs, we first characterize dominant strategies for the surveillance agent and the intruder. Then, we rigorously prove the optimal strategy for the surveillance agent.
\end{abstract}

\begin{IEEEkeywords}
Stochastic robotic surveillance, Markov chains, Stackelberg game, capture probabilities
\end{IEEEkeywords}

\section{Introduction}
\subsubsection*{Problem description and motivation}
In a prototypical robotic surveillance scenario, mobiles robots patrol and
move among locations in an environment (usually modeled by a graph) with
the goal of capturing potential intruders; see Fig.~\ref{fig:surveillance}
for an illustration. Here, we consider a similar setup, where the
patrolling robot is, in addition, facing an omniscient intruder. We model
the strategic interactions between the mobile robot and the intruder as a
Stackelberg game, where the optimal strategy for the surveillance agent is
constructed under the assumption that the intruder acts optimally against
her. This formulation captures the worst-case scenario for the surveillance
agent when playing against the strongest possible opponent. The
corresponding Stackelberg solution is meaningful and practical when little
or no information is known about the intruder. Similar models have appeared
in \cite{NB-NG-FA:12} and \cite{ABA-SLS:16}, where heuristic algorithms
without performance guarantees are provided. Instead, we analyze the
problem from a mathematical perspective and obtain provably optimal or
suboptimal solutions by considering three topologies: star, line and
complete graph.

\begin{figure}[http]
	\centering
	\includegraphics[scale = 0.6]{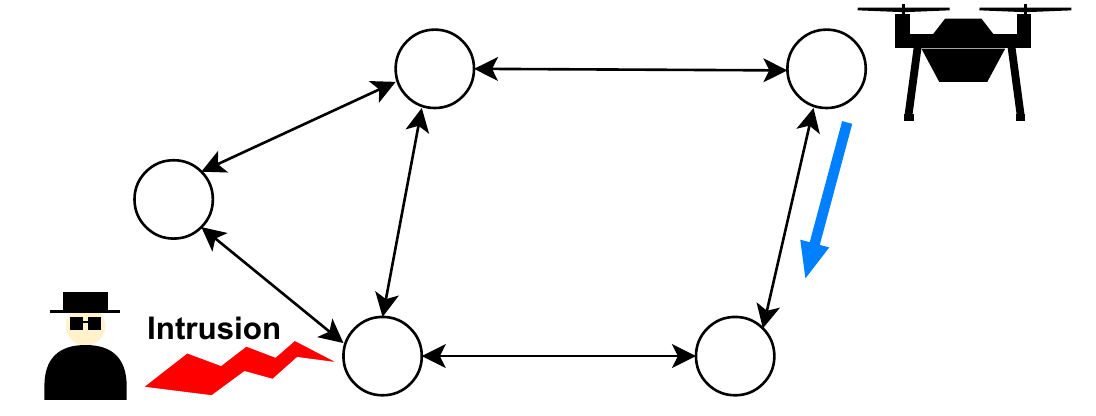}
	\caption{A surveillance scenario where a mobile robot patrols a graph with the goal of capturing potential intruders that  attack certain locations.}\label{fig:surveillance}
\end{figure}

\subsubsection*{Related work}
There have been continuing efforts to study robotic surveillance problems
under various settings, formulations, and assumptions. The early work on
designing deterministic surveillance strategies started
in~\cite{YC:04}. Recent deterministic extensions to cases where locations
might have different importance, or to the coordination of multiple robots
can be found in~\cite{YE-NA-GAK:09, fp-af-fb:09v, FP-JWD-FB:11h,
  SA-EF-SLS:14, ABA-SLS-SS:19}.  Unfortunately, deterministic strategies can
be easily learned, and thus exploited in adversarial settings. In this
respect, stochastic surveillance strategies are more appealing in that they
are mostly unpredictable. One common approach to derive stochastic
surveillance strategies is to model the motion of the surveillance agent as
a first-order Markov chain~\cite{JG-JB:05, GC-AS:11,
  NN-AR-JVH-VI:16}. Within this line of work, Patel \emph{et
  al.}~\cite{RP-PA-FB:14b} studied minimum mean hitting time Markov chains
for robotic surveillance with travel times on edges. They formulated a
convex optimization problem by restricting attentions to the class of
reversible Markov chains. More recently, George \emph{et
  al.}~\cite{MG-SJ-FB:17b} and Duan \emph{et al.}~\cite{XD-MG-FB:17o}
studied and quantified unpredictability of Markov chains and designed
maxentropic surveillance strategies.

The aforementioned works do not explicitly describe the behavior of the malicious intruders. On the other hand, when these models are available, they can be leveraged to design improved surveillance strategies. In order to model the interplay between the surveillance agent and the intruder, surveillance problems have also been studied under a game-theoretic lens. Stackelberg security games, where the defenders and intruders are modeled as strategic players with sequential plays, have been successfully applied in various real-world scenarios \cite{AS-FF-BA-CK-MT:18} such as checkpoints placement and patrolling at airports \cite{JP-MJ-JM-FO-CP-MT-CW-PP-SK:08}, coast guard surveillance \cite{ES-BA-RY-MT-CB-JD-BM-GM:12} and wild life protection \cite{FF-PS-MT:15}. In these works, defenders allocate limited resources to a set of targets so as to optimize their objectives. The problems are formulated as matrix games and the topology of the environments are not explicitly taken into account. In~\cite{NB-NG-FA:12}, the authors introduce a patrolling game where a mobile robot moves on a graph to capture potential intruders who choose when and where to attack. Different intruder models were proposed and analyzed in \cite{ABA-SLS:16}, where the optimal Markov chain based stochastic strategy is computed via a pattern search algorithm. The authors in \cite{ABA-SLS:18} consider an intruder with limited observation time and design a strategy that is both hard to learn and hard to attack. An intruder model where the intruder decides where, when and for how long it attacks is studied in \cite{HY-ST-KSL-SL-JG:19}, and it is found that increasing the randomness of the strategy helps reduce the intruder's reward. The authors in \cite{JB-SK:17} study the impact of the graph topology on the maximization of the minimum expected hitting times. Finally, a sophisticated non-Markovian model was studied in  \cite{TB-PH-AK-VR-MA:15} for the case of complete graphs.

While most of the existing works are concerned with the design of heuristics to compute suboptimal strategies without performance guarantees, in this paper, we derive provably optimal/suboptimal strategies. Towards this goal, we adopt the model of~\cite{ABA-SLS:16} and focus on graphs that correspond to prototypical robotic roadmaps.

\subsubsection*{Contributions}
In this paper, we derive provably optimal and suboptimal strategies for surveillance agents in a Stackelberg game setting. We consider three prototypical robotic roadmaps are considered: star, complete and line graphs. Our problem formulation captures the worst-case scenario for the surveillance agent and thus the solution provides performance guarantees also in less pessimistic scenarios. Our main contributions are as follows.
\begin{enumerate}
	\item We derive a universal upper bound for the capture probability, i.e., the maximum achievable  performance for the surveillance agent facing an omniscient intruder;
	\item We show that this upper bound is tight in the case of the complete graph, and further provide suboptimality guarantees for a natural strategy often referred to as a random walk;
	\item We study dominant strategies for both the intruder and the surveillance agent. Leveraging these insights, we obtain optimal strategies in the star and line graphs.
\end{enumerate}

\subsubsection*{Paper organization}
We provide preliminaries on Markov chains and formulate the Stackelberg game problem in Section~\ref{sec:prelim} and Section~\ref{sec:ProblemFormulation}, respectively. An upper bound on the capture probability and a suboptimal solution in the case of complete graph are obtained in Section~\ref{sec:upperboundcomplete}. We study dominant strategies for the players in Section~\ref{sec:dominancestarline}, and optimal strategies for star and line graphs are given in the same section. 

\subsubsection*{Notation}
Let $\real$, $\mathbb{Z}_{\geq0}$, and $\mathbb{Z}_{>0}$ denote the set of real numbers, nonnegative and positive integers, respectively. Let $\mathbb{1}_n$ and $\mathbb{0}_n$ denote column vectors in $\real^n$ with all entries being $1$ and $0$. $I_n\in \real^{n\times n}$ is the identity matrix. $\mathbb{e}_i$ denotes the $i$-th vector in the standard basis, whose dimension will be made clear when it appears. $[S]$ denotes a diagonal matrix with diagonal elements being $S$ if $S$ is a vector, or being the diagonal of $S$ if $S$ is a square matrix. Let $\otimes$ denote the Kronecker product. $\vecz(\cdot)$ is the vectorization operator that converts a matrix into a column vector.

\section{Preliminaries}\label{sec:prelim}
\subsection{Markov chains}
We start by reviewing the basics of discrete-time Markov chains. A finite-state discrete-time homogeneous Markov chain with state space $\until{n}$ is a sequence of random variables $X_k$, $k\in\mathbb{Z}_{\geq0}$, taking values in $\until{n}$ and satisfying the Markov property, i.e., $X_k$ is such that $\mathbb{P}(X_{k+1}=j\,|\,X_{k}=i,\dots,X_{0}=i_0) =\mathbb{P}(X_{k+1}=j\,|\,X_{k}=i)=p_{ij},$ for all $i,j\in\until{n}$ and $k\in\mathbb{Z}_{\geq0}$, where $p_{ij}$ is the transition probability from state $i$ to state $j$, $P=\{p_{ij}\}\in\real^{n\times{n}}$ is the transition matrix satisfying $P\geq 0$, and $P\mathbb{1}_n=\mathbb{1}_n$; see \cite{JGK-JLS:76}, \cite{JRN:97}. A probability distribution $\bm{\pi}\in\real^n$ is \emph{stationary} for a Markov chain with transition matrix $P$ if it satisfies $\bm{\pi}\geq0$, $\bm{\pi}^\top\mathbb{1}_n=1$ and $\bm{\pi}^\top=\bm{\pi}^\top P$. The transition diagram of a Markov chain $P=\{p_{ij}\}\in\real^{n\times{n}}$ is a directed graph (digraph) $\mathcal{G}=(V,\mathcal{E})$ where $V=\{1,\dots,n\}$ and $\mathcal{E}=\{(i,j)\,|\,i,j\in V, p_{ij}>0\}$. A Markov chain is \emph{irreducible} if its transition diagram is strongly connected. 

\subsection{Hitting time of Markov chains}\label{sec:returntime}
In this paper, we consider a strongly connected digraph $\mathcal{G}=(V,\mathcal{E})$, where $V$ denotes the set of $n$ nodes $\until{n}$ and $\mathcal{E}\subset V\times V$ denotes the set of edges. Given the graph $\mathcal{G}=\{V,\mathcal{E}\}$, let $X_k\in \until{n}$ be the value of a Markov chain with transition diagram $\mathcal{G}$ and transition matrix $P$ at time $k \in \mathbb{Z}_{\geq0}$. For any pair of nodes $i,j\in V$, the \emph{first hitting time} from $i$ to $j$, denoted by $T_{ij}$, is the first time the Markov chain hits node $j$ starting from node $i$, that is
\begin{equation*}
T_{ij}=\min\{k\,|\,X_0=i,X_k=j,k\geq1\}.
\end{equation*}
Note that $T_{ij}$ is itself a random variable. Let the $(i,j)$-th element of the \emph{first hitting time probability matrix} $F_k$ denote the probability that the Markov chain hits node $j$ for the first time in exactly $k$ time units starting from node $i$, i.e., $F_k(i,j)=\mathbb{P}(T_{ij}=k)$. It can be shown that the hitting time probabilities $F_k$ for $k\geq1$ satisfy the following recursive matrix equation \cite[Chapter 5, Eq. (2.4)]{EC:13}
\begin{equation}\label{eq:Fk}
F_{k+1}=P(F_k-\textup{diag}(F_k)),
\end{equation}
where $F_1=P$. The vectorized form of~\eqref{eq:Fk} can be written as
\begin{equation}\label{eq:RecursiveInVec}
\vecz(F_{k+1}) =(I_n\otimes P)(I_{n^2} - E)\vecz(F_{k}),
\end{equation}
where $E = \textup{diag}(\vecz(I_n))$. Note that~\eqref{eq:Fk} can be generalized to the case where there are times on the graph $\mathcal{G}$ \cite{XD-MG-FB:17o}.

\section{Problem formulation}\label{sec:ProblemFormulation}
We consider a robotic surveillance problem where a mobile robot moves randomly between locations in a graph to perform surveillance tasks. Specifically, given a Markov chain strategy, the surveillance robot moves from the current node to a neighboring location according to the corresponding Markov chain transition matrix. An intruder attacks an unknown node in the graph by stationing at the given node for a certain period of time. The intruder is captured if the surveillance agent visits that node within the duration of the attack; see Fig.~\ref{fig:surveillance} for a pictorial representation.
 In this work, we study how the mobile robot should move on the graph in order to maximize the probability of capturing the intruder. 
 
We model the strategic interactions between the surveillance robot and the intruder as a two-player Stackelberg game with a leader and a follower. The game proceeds as follows: the leader commits to a strategy first, and then the follower, based on the knowledge of the leader's strategy, selects a strategy that optimizes her rewards. Having prior knowledge of the follower's best-response, the leader commits to a strategy that ultimately maximizes her own objective. In our problem setting, the surveillance agent is the leader who chooses a Markov chain as surveillance strategy. This strategy is observed and learned by the intruder (follower) who then chooses the best time and location to attack so as to minimize the probability of being captured. We describe the intruder and surveillance models in the following subsections.

\subsection{Intruder Model}
An intruder aims to attack a location in the graph while the surveillance agent moves around with the goal of capturing her. The intruder requires $\tau$ units of time to complete the attack at any location in the graph. Once it commits to attacking a location, it stations at the location for that given period of time. We assume that the intruder is omniscient \cite{NB-NG-FA:12,ABA-SLS:16}, i.e., it knows or can learn the strategy (intended as a description of the Markov chain) as well as the current location of the surveillance agent perfectly. Given this information, the intruder decides when and where to attack so that it is least likely to be captured. 

Given a Markov chain strategy for the surveillance agent, the intruder picks a pair of locations $i$ and $j$ so that the probability that the surveillance agent goes from location $i$ to location $j$ within the attack duration is minimized. Then, the intruder attacks location $j$ whenever the surveillance agent is at location $i$. Formally, for a surveillance strategy parameterized by a Markov transition matrix $P$, an optimal strategy for the intruder $(i^*,j^*)$ is given by
\begin{equation}\label{eq:smartestintruder}
(i^*,j^*)\in\argmin_{i,j}\{\mathbb{P}(T_{ij}(P)\leq \tau)\}.
\end{equation}
Note that optimal strategies in~\eqref{eq:smartestintruder} are not unique in general, and the intruder is free to pick any one of them.

\subsection{Surveillance model and problem formulation}
The surveillance agent, knowing that the intruder selects an optimal strategy according to \eqref{eq:smartestintruder}, adopts a Markov chain $P^*$ that maximizes the probability of capturing the intruder, i.e.,
\begin{equation*}
P^*=\argmax_{P}\min_{i,j}\{\mathbb{P}(T_{ij}(P)\leq \tau)\}.
\end{equation*}

In this paper, we are interested in finding an optimal strategy for the surveillance agent when playing against the omniscient intruder just described. That is, we are interested in solving the following optimization problem.
\begin{problem}
\label{prob:optimalstrategy}
Given a strongly connected digraph $\mathcal{G} = (V, \mathcal{E})$ and the attack duration $\tau\in\mathbb{Z}_{>0}$, find a Markov chain that conforms to the graph topology and maximizes the capture probability, i.e., solves the following optimization
	problem:
\begin{equation}\label{eq:originalopt}
\begin{aligned}
& \underset{P\in\mathbb{R}^{n\times n}}{\textup{maximize}}
& &\min_{i,j}\{\mathbb{P}(T_{ij}(P)\leq \tau)\}\\
& \textup{subject to}
&& P\mathbb{1}_n=\mathbb{1}_n,\\
&&& p_{ij}\geq0,\quad\textup{for all }(i,j)\in\mathcal{E},\\
&&& p_{ij}=0,\quad\textup{for all }(i,j)\notin\mathcal{E}.
\end{aligned}
\end{equation}
\end{problem}
We let $\mathbb{V}\in[0,1]$ denote the optimal value of Problem~\ref{prob:optimalstrategy} and refer to it as the value of the game. Note that $\mathbb{V}$ represents the capture probability obtained when the surveillance agent utilizes an optimal strategy against the omniscent intruder. As a consequence, $\mathbb{V}$ lower bounds the capture probability also in less pessimistic circumstances, e.g., in the case of a nonstrategic intruder.
\begin{remark}[Impact of the attack duration]\label{rk:attackduration}
Problem~\ref{prob:optimalstrategy} is interesting only when the attack duration $\tau$ is in an appropriate range for a given graph topology. Specifically, 
\begin{enumerate}
	\item the attack duration should be greater than or equal to the diameter $D$ of graph $\mathcal{G}$, i.e., $\tau\geq D$. Otherwise, the omniscient intruder always succeeds by attacking one end of the graph diameter when the surveillance agent is visiting the other. In this case, $\mathbb{V}=0$ no matter what strategy the surveillance agent uses;
	\item the attack duration should be smaller than the length of any closed path on the graph $\mathcal{G}$ that has the same initial and final vertices and visits all locations at least once (e.g., a Hamiltonian tour of size $n$ if it exists). Otherwise, the surveillance agent does not benefit from using a Markov chain as a randomized strategy, and the capture is guaranteed by following the deterministic closed path.
\end{enumerate}
\end{remark}
We assume hereafter that the attack duration $\tau$ takes a nontrivial value as described in Remark~\ref{rk:attackduration}.
\begin{remark}[Irreducibility of optimal solutions]\label{remark:irreducibility}
No irreducibility constraint is imposed on the Markov chain in Problem~\ref{prob:optimalstrategy}. However, if $\tau$ takes nontrivial values, an optimal Markov chain is necessarily irreducible. As the transition diagrams of any reducible Markov chains is not strongly connected, there must exist a pair of locations $i$ and $j$ such that $\mathbb{P}(T_{ij}\leq\tau)=0$, so that also $\mathbb{V}=0$.
\end{remark}

\begin{remark}[Graph dimension]\label{rk:graphdim}
Without loss of generality, we consider graphs with more than $2$ nodes in the rest of this paper, i.e., $n\geq3$. When $n=2$, the only meaningful case is a complete graph, and the 
optimal solution to Problem~\ref{prob:optimalstrategy} is $P=\frac{1}{2}\mathbb{1}_2\mathbb{1}_2^\top$ with $\mathbb{V}=\frac{1}{2}$ if $\tau=1$, and an irreducible permutation matrix (a Hamiltonian tour) with $\mathbb{V}=1$ if $\tau\geq2$.
\end{remark}

\section{Upper bounds of value of the game and suboptimal solution in complete graphs}\label{sec:upperboundcomplete}
In this section, we first derive a universal upper bound for the value of the game, which does not depend the graph topology. We then consider the case of complete graph and show that the upper bound can be tight. Further, we provide suboptimality guarantees for a a Markov chain whose corresponding transition matrix has identical entries.

\subsection{Upper bound of the value of the game}
We introduce an auxiliary variable $\mu\in\mathbb{R}$ and exploit the iteration~\eqref{eq:Fk} to rewrite the optimization problem~\eqref{eq:originalopt} as follows
\begin{equation}\label{eq:equivalentopt}
\begin{aligned}
& \underset{\mu\in\mathbb{R},P\in\mathbb{R}^{n\times n}}{\textup{maximize}}
& &\mu\\
& \textup{subject to}
&& \mu\mathbb{1}_n\mathbb{1}_n^\top\leq\sum_{k=1}^{\tau} F_k,\\
&&&F_1 = P\\
&&& F_{k+1}=P(F_k-\textup{diag}(F_k)),\quad 1\leq k\leq\tau-1\\
&&& P\mathbb{1}_n=\mathbb{1}_n,\\
&&& p_{ij}\geq0,\quad\textup{for all }(i,j)\in\mathcal{E},\\
&&& p_{ij}=0,\quad\textup{for all }(i,j)\notin\mathcal{E},
\end{aligned}
\end{equation}
where the inequality in the first constraint is element-wise. Clearly, problem~\eqref{eq:equivalentopt} is equivalent to~\eqref{eq:originalopt}, and the optimal value $\mu^*$ is the value of the game. The following theorem shows how to obtain a universal upper bound on $\mu^*$. 

\begin{theorem}[Upper bound for the value of the game]\label{thm:upperbound}
Given a strongly connected digraph $\mathcal{G}=(V,\mathcal{E})$ with $n$ nodes and an attack duration $\tau$ that takes nontrivial values as in Remark~\ref{rk:attackduration}, 
the value of the game satisfies $\mathbb{V}\leq\frac{\tau}{n}$.
\end{theorem}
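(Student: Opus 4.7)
The plan is to prove the bound via a simple counting argument on how many distinct nodes a Markov trajectory of length $\tau$ can visit. Fix any admissible transition matrix $P$ and any starting node $i\in V$. Consider the trajectory $X_0=i, X_1, X_2, \ldots, X_\tau$. Because $T_{ij}\leq\tau$ if and only if $j\in\{X_1,\ldots,X_\tau\}$, we have the pathwise identity
\begin{equation*}
\sum_{j=1}^n \mathbf{1}[T_{ij}\leq\tau] \;=\; \bigl|\{X_1,X_2,\ldots,X_\tau\}\bigr| \;\leq\; \tau,
\end{equation*}
since at most $\tau$ distinct nodes can appear in a sequence of length $\tau$.

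Taking expectations on both sides yields $\sum_{j=1}^n \mathbb{P}(T_{ij}(P)\leq\tau)\leq\tau$ for every $i\in V$. By an averaging argument (pigeonhole), for each fixed $i$ there exists some $j\in V$ such that $\mathbb{P}(T_{ij}(P)\leq\tau)\leq \tau/n$. Hence
\begin{equation*}
\min_{i,j\in V}\mathbb{P}(T_{ij}(P)\leq\tau) \;\leq\; \frac{\tau}{n}.
\end{equation*}
Since $P$ was arbitrary (subject to the graph constraints of Problem~\ref{prob:optimalstrategy}), taking the maximum over $P$ gives $\mathbb{V}\leq \tau/n$, which is the claim.

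There is essentially no obstacle here: the proof is a one-line combinatorial observation (a trajectory of length $\tau$ visits at most $\tau$ distinct nodes) combined with linearity of expectation and averaging. The bound is universal precisely because it never uses the graph topology or the specific transition probabilities, only the length of the trajectory. This also explains why the bound can be tight on very dense topologies such as the complete graph (where the trajectory can freely distribute its visits uniformly), which is exactly what the subsequent subsection aims to establish.
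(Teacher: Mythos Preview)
Your argument is correct, and it is genuinely different from the paper's. The paper works algebraically with the recursion $F_{k+1}=P(F_k-\textup{diag}(F_k))$: it left-multiplies by the stationary distribution $\bm\pi^\top$ of the optimal (necessarily irreducible) chain to obtain $\bm\pi^\top F_k\le\bm\pi^\top$, sums over $k$, and concludes $\mu^*\le\tau\min_j\pi_j\le\tau/n$. Your argument instead fixes a starting node $i$, uses the pathwise identity $\sum_j\mathbf{1}[T_{ij}\le\tau]=|\{X_1,\dots,X_\tau\}|\le\tau$, and averages over target nodes.

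What each buys: your route is more elementary and more robust---it never invokes irreducibility, stationary distributions, or even the Markov property (any random trajectory of length $\tau$ visits at most $\tau$ distinct vertices), so it applies verbatim to arbitrary, possibly non-Markovian, surveillance strategies. The paper's route, in exchange for needing irreducibility of the optimizer, yields the slightly sharper intermediate inequality $\mathbb{V}\le\tau\min_j\pi_j$, which ties the bound to the stationary distribution of the optimal chain and could be exploited further on graphs where the optimal stationary distribution is provably nonuniform.
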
 
\begin{proof}
By Remark~\ref{remark:irreducibility}, 
an optimal solution $P^*$ to problem~\eqref{eq:equivalentopt} is irreducible and thus has a unique stationary distribution $\bm\pi$. We multiply $\bm\pi^\top$ from the left on both sides of \eqref{eq:Fk} and obtain for $k\geq 1$,
\begin{equation}\label{eq:multistationary}
\bm\pi^\top F_{k+1}=\bm\pi^\top (F_k-\textup{diag}(F_k))\leq \bm\pi^\top F_k.
\end{equation}
By using \eqref{eq:multistationary} recursively, we have that for $k\geq1$,
\begin{equation}\label{eq:uppderboundF}
\bm\pi^\top F_k\leq \bm\pi^\top F_1= \bm\pi^\top P^*=\bm\pi^\top.
\end{equation}
Since $(\mu^*,P^*)$ is an optimal solution to
problem~\eqref{eq:equivalentopt}, it satisfies the first constraint and
thus
\begin{equation}\label{eq:firstconst}
\mu^*\mathbb{1}_n\mathbb{1}_n^\top\leq\sum_{k=1}^{\tau} F_k.
\end{equation}
Multiplying $\bm\pi^\top$ from the left on both sides of \eqref{eq:firstconst} and using \eqref{eq:uppderboundF}, we obtain
\begin{equation*}
\mu^*\mathbb{1}_n^\top\leq\sum_{k=1}^{\tau}\bm\pi^\top=\tau\bm\pi^\top.
\end{equation*}
Since $\bm\pi^\top\mathbb{1}_n=1$, we must have $\min\limits_{1\leq i\leq n}\pi_i\leq\frac{1}{n}$. Therefore, 
\begin{equation*}
\mathbb{V}=\mu^*\leq\tau\min_{1\leq i\leq n}\pi_i=\frac{\tau}{n}.
\end{equation*}
\end{proof}

\subsection{Suboptimal solution in complete digraphs}
Next, we show that for complete digraphs the upper bound $\mathbb{V}\le \frac{\tau}{n}$ can be achieved for certain combinations of $n$ and $\tau$.

\begin{lemma}[Optimal solution in special complete digraph]\label{lemma:completeOpt}
Given a complete digraph $\mathcal{G}=(V,\mathcal{E})$ with $n$ nodes and the attack duration $\tau\leq n$, if $\tau$ divides $n$, then an optimal solution $P^*$ is given by
\begin{equation*}
P^*=\Pi_0\kron\frac{\tau}{n}\mathbb{1}_{\frac{n}{\tau}}\mathbb{1}_{\frac{n}{\tau}}^\top,
\end{equation*}
where $\Pi_0\in\mathbb{R}^{\tau \times\tau}$ is any irreducible permutation matrix that represents a Hamiltonian tour in $\mathcal{G}$.
Moreover, the optimal strategy $P^*$ achieves the upper bound of the value of the game.
\end{lemma}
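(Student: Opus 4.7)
The plan is to directly verify that the proposed matrix $P^*$ achieves capture probability exactly $\tau/n$ for every pair of nodes $(i,j)$. Since Theorem~\ref{thm:upperbound} already provides $\mathbb{V}\leq\tau/n$, this simultaneously establishes optimality of $P^*$ and tightness of the upper bound, without any need to attack the optimization problem~\eqref{eq:equivalentopt} directly.

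First, I would reinterpret the Kronecker structure by relabeling each node of $\mathcal{G}$ as a pair $(a,b)$ with $a\in\{1,\dots,\tau\}$ (a "group" index) and $b\in\{1,\dots,n/\tau\}$ (a position within the group). The factor $\Pi_0$ encodes a Hamiltonian cycle $\sigma$ on the $\tau$ groups, while the factor $\frac{\tau}{n}\mathbb{1}_{n/\tau}\mathbb{1}_{n/\tau}^\top$ says that, conditional on the current group being $a$, the next state is uniformly distributed over all $n/\tau$ nodes in group $\sigma(a)$. A brief check confirms $P^*\geq 0$ and $P^*\mathbb{1}_n=\mathbb{1}_n$, so $P^*$ is a valid transition matrix on the complete digraph, and the deterministic group-level dynamics together with the uniform within-group dispersion are the key structural features to exploit.

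Second, I would prove by induction on $k\geq 1$ the invariant that, starting from $X_0=(a,b)$, the distribution of $X_k$ is uniform on group $\sigma^k(a)$ with mass $\tau/n$ on each of its nodes. The base case follows immediately from the structure of row $(a,b)$ of $P^*$; the inductive step follows because every row indexed by a node in group $\sigma^k(a)$ sends mass uniformly into group $\sigma^{k+1}(a)$. Fix then any target node $(a',b')$. Because $\sigma$ is a $\tau$-cycle, there is a unique $k^\star\in\{1,\dots,\tau\}$ with $\sigma^{k^\star}(a)=a'$, and at every other time in the window $\{1,\dots,\tau\}$ the chain is deterministically in a different group and thus cannot hit $(a',b')$. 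This directly gives
\begin{equation*}
\mathbb{P}\bigl(T_{(a,b),(a',b')}\leq\tau\bigr)=\mathbb{P}\bigl(X_{k^\star}=(a',b')\bigr)=\frac{\tau}{n},
\end{equation*}
uniformly in $(a,b)$ and $(a',b')$, so the worst-case capture probability equals $\tau/n$ and matches the upper bound from Theorem~\ref{thm:upperbound}.

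The main obstacle I anticipate is purely one of bookkeeping: carefully handling both the case $a=a'$ (same group, capture occurs at time $k^\star=\tau$) and $a\neq a'$ (different groups, capture occurs strictly before $\tau$) within a single clean argument. The crucial observation that makes the proof short is that the group-index trajectory $a,\sigma(a),\dots,\sigma^{\tau-1}(a)$ covers every group \emph{exactly once} in $\tau$ consecutive steps; this lets us identify the event $\{T_{(a,b),(a',b')}\leq\tau\}$ with the single-time event $\{X_{k^\star}=(a',b')\}$ and entirely sidestep the recursion~\eqref{eq:Fk} for the first-hitting probabilities.
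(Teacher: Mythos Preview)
Your proposal is correct and follows the same approach as the paper: direct verification that under $P^*$ the capture probability $\mathbb{P}(T_{ij}\leq\tau)=\tau/n$ for every pair $(i,j)$, then invoking Theorem~\ref{thm:upperbound} for optimality. The paper's own proof is in fact the one-line assertion ``by construction, the probability \dots\ is $\tau/n$''; your group-index/within-group decomposition and the observation that the group trajectory visits each group exactly once in $\tau$ steps are precisely the details needed to substantiate that assertion.
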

\begin{proof}
By construction, the probability that a surveillance agent with $P^*$ starting from any location $i$ arrives at any location $j$ within $\tau$ time steps is $\frac{\tau}{n}$.
\end{proof}

The previous result holds only when the combination of $n$ and $\tau$ is such that $\tau$ divides $n$. Nevertheless, leveraging Theorem~\ref{thm:upperbound} we are able to provide suboptimality guarantees for the natural choice of $P=\frac{1}{n}\mathbb{1}_n\mathbb{1}_n^\top$.

\begin{lemma}[Constant factor optimality of random walk]\label{thm:complete}
Given a complete digraph $\mathcal{G}=(V,\mathcal{E})$ with $n\geq3$ nodes and the attack duration $\tau$, the random walk strategy $P=\frac{1}{n}\mathbb{1}_n\mathbb{1}_n^\top$ achieves performance within
\begin{equation*}
\frac{n^\tau-(n-1)^\tau}{\tau n^{\tau-1}}\geq 1-\frac{1}{e}
\end{equation*}
of optimality, where $e$ is Euler's number.
\end{lemma}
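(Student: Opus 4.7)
My plan is to separate the statement into two parts: first identify the capture probability of the random walk explicitly, then compare it against the universal bound $\tau/n$ from Theorem~\ref{thm:upperbound} and massage the resulting ratio into a form amenable to a clean lower bound.

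First, I would observe that under $P=\frac{1}{n}\mathbb{1}_n\mathbb{1}_n^\top$ the successive states of the chain are i.i.d. uniform on $\{1,\dots,n\}$, irrespective of the starting location. Hence, for every pair $(i,j)$, the events $\{X_k=j\}$ for $k=1,\dots,\tau$ are independent Bernoulli$(1/n)$, and
\[
\mathbb{P}(T_{ij}\leq \tau) \;=\; 1-\Bigl(1-\tfrac{1}{n}\Bigr)^{\tau}.
\]
Combining this with the upper bound $\mathbb{V}\le\tau/n$ from Theorem~\ref{thm:upperbound}, the performance ratio is at least
\[
\frac{1-(1-1/n)^{\tau}}{\tau/n} \;=\; \frac{n^{\tau}-(n-1)^{\tau}}{\tau\,n^{\tau-1}},
\]
which establishes the equality asserted in the lemma.

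Next, to prove the second inequality, I rewrite the ratio as an arithmetic mean. Setting $a=1-1/n\in(0,1)$ and using the telescoping identity $1-a^{\tau}=(1-a)\sum_{k=0}^{\tau-1}a^{k}$ together with $1-a=1/n$, one obtains
\[
\frac{n(1-a^{\tau})}{\tau} \;=\; \frac{1}{\tau}\sum_{k=0}^{\tau-1}a^{k}.
\]
Call this quantity $\bar a_\tau$: it is simply the average of the first $\tau$ powers $a^0,a^1,\dots,a^{\tau-1}$. Since $a^\tau<a^{\tau-1}\le \bar a_\tau$, appending one extra term decreases the mean, so $\bar a_\tau$ is non-increasing in $\tau$. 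Because Remark~\ref{rk:attackduration} restricts us to $\tau\le n$ (a Hamiltonian tour in $\mathcal{G}$ has length $n$), we conclude $\bar a_\tau\geq \bar a_n=1-(1-1/n)^n$.

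To finish, I would invoke the elementary inequality $\ln(1-x)\le-x$ on $(0,1)$, which yields $n\ln(1-1/n)\le-1$ and hence $(1-1/n)^n\le e^{-1}$. Substituting gives $\bar a_n\ge 1-1/e$, completing the proof. The only mild subtlety is the monotonicity step for $\bar a_\tau$; everything else is essentially algebraic manipulation plus the standard exponential inequality.
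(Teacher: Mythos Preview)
Your proof is correct and notably more elementary than the paper's. Both arguments begin identically, computing the random-walk capture probability $1-(1-1/n)^{\tau}$ and forming the ratio $f(n,\tau)=\frac{n^{\tau}-(n-1)^{\tau}}{\tau n^{\tau-1}}$. The divergence is in how the lower bound $1-1/e$ is extracted. The paper relaxes $\tau$ (and later $n$) to continuous variables, differentiates, and uses the logarithm bounds $\log x\ge 1-1/x$ and $\log(1+x)\le x$ to show $f(n,\tau)$ is decreasing in $\tau$ and that $h(n)=f(n,n-1)=\frac{n}{n-1}-(1-1/n)^{n-2}$ is decreasing in $n$, finally passing to the limit $n\to\infty$. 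Your telescoping rewrite $f(n,\tau)=\frac{1}{\tau}\sum_{k=0}^{\tau-1}a^{k}$ with $a=1-1/n$ converts the problem into monotonicity of a running arithmetic mean of a decreasing sequence, which is immediate; a single application of $(1-1/n)^{n}\le e^{-1}$ then finishes. Your route avoids calculus entirely and is shorter; the paper's route, on the other hand, yields the slightly sharper intermediate evaluation at $\tau=n-1$ (your $\bar a_{n-1}$ equals the paper's $h(n)$), though this refinement is not needed for the stated bound. One small remark: Remark~\ref{rk:attackduration} actually gives $\tau<n$, hence $\tau\le n-1$, not $\tau\le n$ as you wrote; this only strengthens your monotonicity conclusion, so the argument is unaffected.
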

\begin{proof}
First, note that the capture probability for the random walk surveillance policy $P=\frac{1}{n}\mathbb{1}_n\mathbb{1}_n^\top$ is $1-(1-\frac{1}{n})^\tau$. Therefore, the random walk achieves performance within
\begin{equation*}
f(n,\tau)=\frac{n^\tau-(n-1)^\tau}{\tau n^{\tau-1}}
\end{equation*}
of optimality.
Let $n$ be fixed, and $g(\tau)=\frac{n^\tau-(n-1)^\tau}{\tau n^{\tau-1}}$. We relax $\tau$ to be a continuous variable and take derivative of $g(\tau)$ as
\begin{align*}
\frac{dg(\tau)}{d\tau}&=-\frac{n}{\tau^2}+\frac{n}{\tau^2}\big(1-\frac{1}{n}\big)^\tau-\frac{n}{\tau}\big(1-\frac{1}{n}\big)^\tau\log\big(1-\frac{1}{n}\big)\\
&=\frac{n}{\tau^2}\big((1-\tau\log(1-\frac{1}{n}))(1-\frac{1}{n})^\tau-1\big)\\
&\leq\frac{n}{\tau^2}\big((1-\tau(1-\frac{1}{1-\frac{1}{n}}))(1-\frac{1}{n})^\tau-1\big)\\
&=\frac{n}{\tau^2}\big((1+\frac{\tau}{n-1})(1-\frac{1}{n})^\tau-1\big)\leq0,
\end{align*}
where we used the upper bound $\log x\geq 1-\frac{1}{x}$ in the first inequality, and $\tau\leq n-1$ and $(1-\frac{1}{n})^\tau\leq\frac{1}{2}$ in the second inequality. Therefore, for fixed $n$, $f(n,\tau)$ is a decreasing function in $\tau$ and
\begin{equation*}
f(n,\tau)\geq f(n,n-1)=\frac{n}{n-1}-(1-\frac{1}{n})^{n-2}\triangleq h(n).
\end{equation*}
We relax $n$ to be a continuous variable in this case and compute the derivative of $h(n)$ as
\begin{align*}
\frac{dh(n)}{dn}&=-\frac{1}{(n-1)^2}+(1-\frac{1}{n})^{n-2}\big(\log\frac{n}{n-1}-\frac{n-2}{n^2-n}\big)\\
&\leq-\frac{1}{(n-1)^2}+(1-\frac{1}{n})^{n-2}(\frac{1}{n-1}-\frac{n-2}{n^2-n})\\
&=-\frac{1}{(n-1)^2}+(1-\frac{1}{n})^{n-1}\frac{2}{(n-1)^2}\\
&\leq-\frac{1}{(n-1)^2}+(1-\frac{1}{3})^{2}\frac{2}{(n-1)^2}\leq0,
%
%
%
\end{align*}
where we used the upper bound $\log(1+x)\leq x$ and the proved that $(1-\frac{1}{n})^{n-1}$ is decreasing as $n$ increases. Therefore, we have that $h(n)$ is a decreasing function. In summary, we have
\begin{multline*}
f(n,\tau)\geq f(n,n-1)\\
\geq\lim_{n\rightarrow\infty}h(n)=\lim_{n\rightarrow\infty}\big(\frac{n}{n-1}-(1-\frac{1}{n})^{n-2}\big)=1-\frac{1}{e}.
\end{multline*}
\end{proof}

\section{Strategy dominance and optimal strategies in star and line graphs}\label{sec:dominancestarline}
In this section, we first obtain dominated strategies for the intruder as well as the dominant strategies for the surveillance agent. Leveraging these result, we derive optimal strategies for the surveillance agent in star and line graphs.
\subsection{Dominated strategies for the omniscient intruder}
In this subsection, we present two lemmas characterizing dominated strategies for the omniscient intruder, i.e., strategies that intruder will never choose as they lead to a higher probability of being captured.
\begin{lemma}[Dominated strategy for the intruder]\label{lemma:dominance_int}
Given a strongly connected digraph $\mathcal{G}=(V,\mathcal{E})$ and an irreducible Markov chain strategy $P$ for the surveillance agent, attacking node $j\in V$ when the surveillance agent is at node $i\in V$, $i\neq j$, is not optimal for the omniscient intruder if:
\begin{enumerate}
\item\label{item:pre} there exists a node $k$ such that any path from node $k$ to node $j$ contains node $i$; or
\item\label{item:post} there exists a node $k$ such that any path from node $i$ to node $k$ contains node $j$.
\end{enumerate}
Moreover, in case \ref{item:pre} (resp. in case \ref{item:post}) , attacking node $j$ when the surveillance agent is at node $k$  (resp. node $i$) is a better strategy for the omniscient intruder.
\end{lemma}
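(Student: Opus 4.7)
The plan is to translate "better for the omniscient intruder" into the inequality $\mathbb{P}(T_{kj}\leq\tau)\leq\mathbb{P}(T_{ij}\leq\tau)$ in case \ref{item:pre}, and $\mathbb{P}(T_{ik}\leq\tau)\leq\mathbb{P}(T_{ij}\leq\tau)$ in case \ref{item:post}. Both inequalities will follow from a single idea: if a ``blocking'' vertex must appear on every path, then every sample trajectory that reaches the target must first hit this blocking vertex, and the strong Markov property lets me split the hitting time across that intermediate event.

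For case \ref{item:pre}, the assumption that every path from $k$ to $j$ traverses $i$ forces the random trajectory issued at $k$ to visit $i$ strictly before it reaches $j$, so $T_{ki}\leq T_{kj}-1$ on the event $\{T_{kj}<\infty\}$. Applying the strong Markov property at the stopping time $T_{ki}$, the post-hitting trajectory is a fresh chain started at $i$, hence
\begin{equation*}
\mathbb{P}(T_{kj}\leq\tau) \;=\; \sum_{t=1}^{\tau-1}\mathbb{P}(T_{ki}=t)\,\mathbb{P}(T_{ij}\leq\tau-t).
\end{equation*}
Using monotonicity of $s\mapsto\mathbb{P}(T_{ij}\leq s)$ and $\sum_{t\geq 1}\mathbb{P}(T_{ki}=t)\leq 1$, I bound each factor $\mathbb{P}(T_{ij}\leq\tau-t)\leq\mathbb{P}(T_{ij}\leq\tau-1)$ and conclude $\mathbb{P}(T_{kj}\leq\tau)\leq\mathbb{P}(T_{ij}\leq\tau-1)\leq\mathbb{P}(T_{ij}\leq\tau)$. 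Case \ref{item:post} is entirely symmetric: the hypothesis that $j$ lies on every path from $i$ to $k$ gives $T_{ij}\leq T_{ik}-1$ almost surely, and the strong Markov property at $T_{ij}$ yields
\begin{equation*}
\mathbb{P}(T_{ik}\leq\tau) \;=\; \sum_{t=1}^{\tau-1}\mathbb{P}(T_{ij}=t)\,\mathbb{P}(T_{jk}\leq\tau-t) \;\leq\; \mathbb{P}(T_{ij}\leq\tau-1) \;\leq\; \mathbb{P}(T_{ij}\leq\tau),
\end{equation*}
where the first inequality uses $\mathbb{P}(T_{jk}\leq\tau-t)\leq 1$.

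There is no serious obstacle; irreducibility of $P$ is used only to guarantee that the summations above are well-defined with finite hitting times, and the chief care needed is in translating the deterministic path-blocking hypothesis into the almost-sure trajectory statement ($T_{ki}<T_{kj}$ in case \ref{item:pre}, $T_{ij}<T_{ik}$ in case \ref{item:post}). Degenerate choices such as $k=j$ in \ref{item:pre} or $k=i$ in \ref{item:post} can be excluded up front, since otherwise the proposed alternative would either be identical to the original attack or would trivially worsen the intruder's payoff; the content of the lemma lies in the generic case $k\notin\{i,j\}$, which is exactly where the above decomposition delivers the strict improvement.
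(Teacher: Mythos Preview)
Your proof is correct and follows essentially the same approach as the paper: decompose the hitting probability through the mandatory intermediate node via the strong Markov property, then bound the resulting sum using monotonicity of $s\mapsto\mathbb{P}(T_{ij}\leq s)$ and the fact that the first-passage probabilities sum to at most one. The paper's write-up is terser (it simply bounds the sum by the maximum factor and asserts case~\ref{item:post} is analogous), while you spell out the strong Markov step and the case~\ref{item:post} decomposition explicitly, but the underlying argument is identical.
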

\begin{proof}
The probability that the surveillance agent visiting node $i$ captures the intruder attacking node $j$ is
\begin{equation*}
\mathbb{P}(T_{ij}\leq \tau)=\sum_{ t=1}^{\tau} \mathbb{P}(T_{ij}=  t).
\end{equation*}

Regarding~\ref{item:pre}, we need to show that
\begin{equation*}
\mathbb{P}(T_{kj}\leq \tau)\leq\mathbb{P}(T_{ij}\leq \tau).
\end{equation*}
Since any path from node $k$ to node $j$ contains node $i$, by definition
of probability, and the memoryless property of Markov chains, we have
\begin{align*}
\mathbb{P}(T_{kj}\leq\tau)&=\sum_{ t=1}^{\tau-1}\mathbb{P}(T_{ki}=  t)\mathbb{P}(T_{ij}\leq \tau-  t)\\
&\leq \max_{1\leq  t\leq\tau-1}\mathbb{P}(T_{ij}\leq \tau-  t)\\
&= \mathbb{P}(T_{ij}\leq \tau- 1)\\
&\leq \mathbb{P}(T_{ij}\leq \tau),
\end{align*}
where we used that $\mathbb{P}(T_{ij}\leq \tau- t)$ is  decreasing  with $t$.

The proof for~\ref{item:post} follows a similar argument as in~\ref{item:pre}.
\end{proof}
\begin{lemma}[Dominated strategy on leaf nodes]\label{coro:intruder_dominance}
Given a strongly connected digraph $\mathcal{G}=(V,\mathcal{E})$ with $n\geq3$ nodes and an irreducible Markov chain strategy $P$ for the surveillance agent, if node $i\in V$ is a leaf node in $\mathcal{G}$, then attacking node $i$ when the surveillance agent just leaves node $i$ is not optimal for the omniscient intruder.
\end{lemma}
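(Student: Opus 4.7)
The plan is to reduce this corollary directly to Lemma~\ref{lemma:dominance_int}(i). Let $i$ be the leaf node and let $m$ denote its unique neighbor in $\mathcal{G}$: since $\mathcal{G}$ is strongly connected and $i$ is a leaf, the only edges incident to $i$ are $(i,m)$ and $(m,i)$, so from $i$ the agent can only move to $m$, and the only edge terminating at $i$ is $(m,i)$.

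First I would translate the statement ``the agent just leaves $i$'' into the Markov-chain framework. Because the only out-neighbor of $i$ is $m$, ``just left $i$'' forces the current state to be $m$. By the memoryless property of the chain, conditioning on the additional information that the previous state was $i$ does not change the distribution of the future trajectory given the present state $m$. Hence the capture probability under this intruder strategy is exactly $\mathbb{P}(T_{mi}\leq\tau)$.

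Next I would verify the hypothesis of Lemma~\ref{lemma:dominance_int}(i) with the agent's location equal to $m$ and the attacked node equal to $i$. Since $n\geq 3$, the set $V\setminus\{i,m\}$ is nonempty, so we may pick any $k$ in it; note in particular $k\neq m$, which is what makes the resulting alternative strategy genuinely different. Because $(m,i)$ is the unique edge into $i$, any path from $k$ to $i$ must end with this edge and therefore pass through $m$. This verifies the hypothesis of Lemma~\ref{lemma:dominance_int}(i), and the conclusion of that lemma then states that attacking $i$ when the agent is at $k$ is a better strategy for the omniscient intruder than attacking $i$ when the agent is at $m$ (equivalently, just after the agent leaves $i$). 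This shows the strategy in question is not optimal.

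The only genuine subtlety, and thus the one spot I would write carefully, is the equivalence between ``agent just left $i$'' and ``agent is currently at $m$'' for the purpose of computing capture probabilities; the combinatorial content is entirely inherited from Lemma~\ref{lemma:dominance_int} and the assumption $n\geq 3$ is what makes a witness $k$ available.
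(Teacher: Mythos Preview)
You have misread the statement. The intruder strategy under consideration is $(i,i)$ --- attack node $i$ while the agent is currently at node $i$ --- not $(m,i)$. The phrase ``just leaves'' is admittedly loose, but the intended meaning is unambiguous from the paper: the proof computes $\mathbb{P}(T_{ii}\le\tau)$ (the first \emph{return} time to $i$), and the lemma is invoked in the proof of Lemma~\ref{lemma:dominance_sur} with the explicit sentence ``the strategy $(1,1)$ is a dominated strategy for the intruder.'' More structurally, Lemma~\ref{lemma:dominance_int} is stated only for $i\neq j$, so the diagonal strategies $(i,i)$ are \emph{not} covered by it; the whole point of Lemma~\ref{coro:intruder_dominance} is to handle the diagonal case at leaf nodes.

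Your argument therefore proves the wrong thing. Showing that $(m,i)$ is dominated via Lemma~\ref{lemma:dominance_int}(i) is correct but already contained in that lemma and not what is being claimed here. For the actual target $(i,i)$, your reduction breaks down because Lemma~\ref{lemma:dominance_int} does not apply when the agent's location equals the attacked node. The paper instead argues directly: writing $\mathbb{P}(T_{ii}\le\tau)=p_{ii}+(1-p_{ii})\mathbb{P}(T_{ji}\le\tau-1)$ (here $j$ is the unique neighbor, and note the self-loop $p_{ii}$ may be positive, contrary to your assertion that ``from $i$ the agent can only move to $m$''), and separately bounding $\mathbb{P}(T_{ki}\le\tau)\le\mathbb{P}(T_{ji}\le\tau-1)$ for any $k\notin\{i,j\}$, one gets $\mathbb{P}(T_{ii}\le\tau)\ge\mathbb{P}(T_{ki}\le\tau)$. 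That is the missing computation.
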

\begin{proof}
The case of $\tau=1$ is uninteresting here because the surveillance agent fails with probability $1$ if the intruder attacks the leaf node $i$ when the surveillance agent visits other nodes than node $i$ and its neighbor. Therefore, we consider $\tau\geq2$ in the following. Let node $j$ be the neighbor node of the leaf node $i$, then 
\begin{equation}\label{eq:self}
\mathbb{P}(T_{ii}\leq\tau)=p_{ii}+(1-p_{ii})\mathbb{P}(T_{ji}\leq\tau-1).
\end{equation}
Moreover, for $k\in V$, $k\neq i$ and $k\neq j$,
\begin{align}\label{eq:ineq}
\begin{split}
\mathbb{P}(T_{ki}\leq\tau)&=\sum_{ t=1}^{\tau-1}\mathbb{P}(T_{kj}= t)\mathbb{P}(T_{ji}\leq \tau- t)\\
&\leq \max_{1\leq  t\leq \tau-1}\mathbb{P}(T_{ji}\leq \tau- t)\\
&= \mathbb{P}(T_{ji}\leq\tau-1).
\end{split}
\end{align}
Therefore, by \eqref{eq:self} and \eqref{eq:ineq} we have
\begin{align*}
\mathbb{P}(T_{ii}\leq\tau)&\geq p_{ii}+(1-p_{ii})\mathbb{P}(T_{ki}\leq\tau)\\
&\geq p_{ii}\mathbb{P}(T_{ki}\leq\tau)+(1-p_{ii})\mathbb{P}(T_{ki}\leq\tau)\\
&= \mathbb{P}(T_{ki}\leq\tau),
\end{align*}
which implies that attacking node $i$ when the surveillance agent is at node $k$ is a better strategy.
\end{proof}

\subsection{Dominant strategies for the surveillance agent}
In this subsection, we show that part of the optimal surveillance strategy can be determined readily when leaf nodes are present, where leaf nodes are nodes that have only one neighboring node.

\begin{lemma}[Dominant strategy on leaf nodes]\label{lemma:dominance_sur}
Given a strongly connected digraph $\mathcal{G}=(V,\mathcal{E})$ with $n\geq3$ nodes, if node $i\in V$ is a leaf node in $\mathcal{G}$ with node $j\in V$ as its only neighbor, then the optimal strategy $P^*$ satisfies
\begin{equation*}
P^*(i,i) = 0,\textup{ and } P^*(i,j) = 1.
\end{equation*}
\end{lemma}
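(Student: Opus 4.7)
The plan is to fix any feasible strategy $P$ and construct a modified strategy $\tilde P$ that agrees with $P$ on every row except row $i$, where we set $\tilde P(i,i) = 0$ and $\tilde P(i,j) = 1$. Since $j$ is the unique neighbor of $i$, this $\tilde P$ is feasible. I then aim to show
\[
\min_{(k,l)}\mathbb{P}(T_{kl}(\tilde P) \leq \tau) \;\geq\; \min_{(k,l)}\mathbb{P}(T_{kl}(P) \leq \tau),
\]
so that starting from any optimal $P^*$ one obtains another optimal strategy $\tilde P^*$ of the form claimed in the lemma.

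Next, I would invoke Lemma~\ref{coro:intruder_dominance} to argue that the attack pair $(i,i)$ is dominated under any irreducible Markov chain, hence the minimum defining the value of the game can be restricted to pairs $(k,l) \neq (i,i)$. It therefore suffices to show $\mathbb{P}(T_{kl}(\tilde P) \leq \tau) \geq \mathbb{P}(T_{kl}(P) \leq \tau)$ for every such pair. I split this into two cases. For pairs $(k, i)$ with $k \neq i$, the first hitting time from $k$ to $i$ depends only on the transitions out of states distinct from $i$, because the chain terminates the instant it enters $i$; since $P$ and $\tilde P$ coincide on every row except row $i$, the two distributions of $T_{ki}$ are identical and the inequality holds with equality. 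For pairs $(k,l)$ with $l \neq i$, I would use a pathwise coupling: simulate $(\tilde X_t)$ under $\tilde P$ starting from $k$, then construct $(X_t)$ under $P$ that traverses the same sequence of non-$i$ states but, at each visit to $i$, first executes a $\mathrm{Geom}(1 - P(i,i))$ number of self-loops before moving to $j$. The marginals are correct, and $T_{kl}(P) \geq T_{kl}(\tilde P)$ pathwise for every target $l \neq i$, which yields the desired comparison of capture probabilities.

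The main obstacle is making the coupling fully rigorous, in particular checking that the inserted holding times at $i$ yield a trajectory with the correct joint law under $P$, and covering the edge case $P(i,i) = 1$ (which makes $P$ reducible and contradicts Remark~\ref{remark:irreducibility} at any optimal solution). A cleaner, computational alternative that avoids these technicalities is to prove the inequality directly by induction on $\tau$ using the recursion \eqref{eq:Fk}: one isolates the $i$-th row of $F_k$ and exploits the fact that rows other than $i$ of $F_k$ depend on $P(i, i)$ only through convex combinations with the $i$-th row, leading to a monotone comparison of the partial sums $\sum_{k=1}^{\tau} F_k$ between $P$ and $\tilde P$. Either route closes the proof.
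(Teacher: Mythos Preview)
Your proposal is correct and shares the paper's overall scaffold: modify row $i$ of an arbitrary feasible $P$, invoke Lemma~\ref{coro:intruder_dominance} to discard the pair $(i,i)$, observe that $T_{ki}$ is unaffected for $k\neq i$ since those hitting times do not depend on the transitions out of $i$, and then compare capture probabilities for every remaining target $l\neq i$. The difference lies in this last comparison. The paper carries it out by an explicit induction on $\tau$, treating first the pairs $(i,l)$ via the identity $\mathbb{P}(T^*_{il}\le t+1)=\mathbb{P}(T^*_{jl}\le t)$ together with a first-passage-through-$i$ decomposition (and an appeal to Lemma~\ref{lemma:dominance_int} for one inequality), and then the pairs $(k,l)$ with $k,l\neq i$ by the same decomposition once more. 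Your pathwise coupling---insert an independent $\mathrm{Geom}(1-P(i,i))$ holding time at every visit of the $\tilde P$-trajectory to $i$ to recover the law under $P$---replaces all of this with the single stochastic domination $T_{kl}(P)\ge T_{kl}(\tilde P)$, valid for every $l\neq i$ at once and with no case split on $k$. The coupling is cleaner and does not rely on Lemma~\ref{lemma:dominance_int}; the paper's induction, on the other hand, is self-contained computation that avoids having to verify the coupling has the correct marginal. Your alternative sketch via the recursion~\eqref{eq:Fk} is close in spirit to the paper's route, though the paper phrases it as a path decomposition rather than a matrix-level monotonicity.
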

\begin{proof}
Without loss of generality, suppose that node $1$ is a leaf node in $\mathcal{G}$ and node $2$ is its neighbor. Let $P$ be a strategy that is the same as $P^*$ except that $P(1,1)=p>0$ and $P(1,2)=1-p<1$. We prove that for all $i,j\in V$, the capture probability $\mathbb{P}(T_{ij}^*\leq\tau)$ for $P^*$ is greater than or equal to $\min_{\{i,j\in V\}}\mathbb{P}(T_{ij}\leq\tau)$ for $P$, which leads to $\min_{\{i,j\in V\}}\mathbb{P}(T_{ij}^*\leq\tau)\geq \min_{\{i,j\in V\}}\mathbb{P}(T_{ij}\leq\tau)$.

Since $P$ and $P^*$ differ by only the first row and node $1$ is a leaf node, we must have $\mathbb{P}(T_{i1}^*\leq \tau)=\mathbb{P}(T_{i1}\leq \tau)$ for all $i\in\{2,\dots,n\}$.  Moreover, by Lemma~\ref{coro:intruder_dominance}, the strategy $(1,1)$ is a dominated strategy for the intruder, thus $\min_{\{i,j\in V\}}\mathbb{P}(T_{ij}^*\leq\tau)$ and $\min_{\{i,j\in V\}}\mathbb{P}(T_{ij}\leq\tau)$ do not attain minimum at $(1,1)$.

We next prove that  $\mathbb{P}(T_{1j}^*\leq\tau)\geq\mathbb{P}(T_{1j}\leq\tau)$ for all $j\in\{2,\dots,n\}$ by induction. Let $d_{1j}$ be the length of the shortest paths from node $1$ to node $j$. The probabilities of these shortest paths are equal to the products of the edge probabilities along the paths, and for $P$ and $P^*$ they differ by a factor of $1-p$. Thus, we have that $\mathbb{P}(T_{1j}^*\leq d_{1j})>\mathbb{P}(T_{1j}\leq d_{1j})$. Suppose when $\tau\leq t$ for $t\geq d_{1j}$, we have  $\mathbb{P}(T_{1j}^*\leq \tau)>\mathbb{P}(T_{1j}\leq \tau)$ and let $\ell_{ij}^t$ be the set of paths from node $i$ to node $j$ that do not contain node $1$ and have length less than or equal to $t$ for $i\in\{2,\dots,n\}$, then for $\tau=t+1$,
\begin{align*}
\mathbb{P}(T_{1j}^*\leq  t+1)&=\mathbb{P}(T_{2j}^*\leq  t)\\
&=\sum_{t_1=1}^t\mathbb{P}(T_{21}^*=  t_1)\mathbb{P}(T_{1j}^*\leq  t-t_1)+\mathbb{P}(\ell_{2j}^t)\\
&\geq \sum_{t_1=1}^t\mathbb{P}(T_{21}=  t_1)\mathbb{P}(T_{1j}\leq  t-t_1)+\mathbb{P}(\ell_{2j}^t)\\
&=\mathbb{P}(T_{2j}\leq  t)\\
&\geq p\mathbb{P}(T_{1j}\leq  t)+(1-p)\mathbb{P}(T_{2j}\leq  t)\\
&=\mathbb{P}(T_{1j}\leq  t+1),
\end{align*}
where the first inequality follows from the induction hypothesis  and the second inequality follows from Lemma~\ref{lemma:dominance_int}.

Finally, for $i,j\in\{2,\dots,n\}$, we have
\begin{align*}
\mathbb{P}(T_{ij}^*\leq  \tau)&=\sum_{t=1}^{\tau}\mathbb{P}(T_{i1}^*=  t)\mathbb{P}(T_{1j}^*\leq  \tau-t)+\mathbb{P}(\ell_{ij}^{\tau})\\
&\geq \sum_{t=1}^{\tau}\mathbb{P}(T_{i1}=  t)\mathbb{P}(T_{1j}\leq  \tau-t)+\mathbb{P}(\ell_{ij}^{\tau})\\
&=\mathbb{P}(T_{ij}\leq  \tau).
\end{align*}

In summary, we have that $\min_{\{i,j\in V\}}\mathbb{P}(T_{ij}^*\leq\tau)\geq \min_{\{i,j\in V\}}\mathbb{P}(T_{ij}\leq\tau)$, which completes the proof. 
\end{proof}

\subsection{Optimal solution for star graphs}
In this subsection, we consider the star topology, which represents the abstraction of an environment where there is a corridor connecting multiple rooms. The optimal strategy for the surveillance agent is given in the following theorem.

\begin{theorem}[Optimal solution in star graph]\label{thm:star}
	Given a directed star graph $\mathcal{G}=(V,\mathcal{E})$ with $n\geq3$ nodes and node $1$ being the center, the optimal strategy $P^*$ for the surveillance agent is given by
	\begin{equation}\label{eq:staroptimal}
	P^*=\begin{bmatrix}
	0&\frac{1}{n-1}&\frac{1}{n-1}&\cdots&\frac{1}{n-1}\\
	1&0&0&\cdots&0\\
	1&0&0&\cdots&0\\
	\vdots&\vdots&\vdots&\cdots&\vdots\\
	1&0&0&\cdots&0
	\end{bmatrix}.
	\end{equation}
	Moreover, the value of the game $\mathbb{V}$ satisfies
	\begin{equation*}
	\mathbb{V}=\begin{cases}
	1-(1-\frac{1}{n-1})^{\frac{\tau-1}{2}},&\quad\textup{if }\tau\geq2\textup{ is odd},\\
	1-(1-\frac{1}{n-1})^{\frac{\tau}{2}},&\quad\textup{if }\tau\geq2\textup{ is even}.
	\end{cases}
	\end{equation*}
\end{theorem}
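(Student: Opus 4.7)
The plan is to use the two structural lemmas already in hand to reduce the optimization to a one-dimensional problem over the first row of $P$, compute the hitting time probabilities in closed form for the resulting chain, and then optimize over the remaining free parameters.

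First, I would apply Lemma~\ref{lemma:dominance_sur} to each of the $n-1$ leaves $2,\dots,n$: at the optimum, every leaf returns to the center with probability $1$, fixing all rows of $P^*$ except the first. The only remaining design variables are $p_{12},\dots,p_{1n}$, constrained by $p_{1j}\geq 0$ and $\sum_{j=2}^{n}p_{1j}=1$. For irreducibility (see Remark~\ref{remark:irreducibility}), we may further assume $p_{1j}>0$ for every $j$; any strategy with some $p_{1j}=0$ leaves leaf $j$ unreachable, giving value $0$.

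Next, I would compute the first hitting time probabilities for such a chain, exploiting the simple structure that any walk alternates between the center and the leaves. A short induction on the path structure gives, for each leaf $j\in\{2,\dots,n\}$ and any $k\geq 0$,
\begin{equation*}
\mathbb{P}(T_{1j}=2k+1)=(1-p_{1j})^{k}p_{1j},\qquad \mathbb{P}(T_{ij}=2k+2)=(1-p_{1j})^{k}p_{1j}\text{ for }i\neq 1,j,
\end{equation*}
and all other parities have zero probability. Summing geometric series gives the clean expressions
\begin{equation*}
\mathbb{P}(T_{1j}\le\tau)=1-(1-p_{1j})^{\lfloor(\tau-1)/2\rfloor+1},\qquad \mathbb{P}(T_{ij}\le\tau)=1-(1-p_{1j})^{\lfloor\tau/2\rfloor}.
\end{equation*}
Also $\mathbb{P}(T_{i1}\le\tau)=1$ for every $i\neq 1$, and Lemma~\ref{coro:intruder_dominance} removes the self-attack $(i,i)$ on leaves from consideration.

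Then I would carry out the minimization over intruder strategies. Comparing the two expressions above shows that for odd $\tau$ the leaf-to-leaf attack strictly dominates the center-to-leaf attack, while for even $\tau$ they coincide; in both cases the intruder's optimal capture probability against a given leaf $j$ is $1-(1-p_{1j})^{\lfloor\tau/2\rfloor}$ for even $\tau$ and $1-(1-p_{1j})^{(\tau-1)/2}$ for odd $\tau$. The intruder minimizes over $j$ by attacking the leaf with the smallest $p_{1j}$. Consequently the agent's problem reduces to
\begin{equation*}
\max_{p_{12},\dots,p_{1n}\ge 0,\;\sum p_{1j}=1}\; \min_{j}\;\bigl(1-(1-p_{1j})^{m}\bigr),
\end{equation*}
where $m=\lfloor\tau/2\rfloor$ or $(\tau-1)/2$ according to parity. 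Since $1-(1-x)^m$ is strictly increasing in $x$, the max-min is maximized by equalizing $p_{1j}=1/(n-1)$ for all $j$, which is exactly \eqref{eq:staroptimal}. Substituting back yields the stated value of $\mathbb{V}$.

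The main obstacle is mostly bookkeeping: keeping the two parities straight and verifying that between the odd and even regimes the intruder's optimal attack type switches (from the unique leaf-to-leaf attack for odd $\tau$ to a tie between leaf-to-leaf and center-to-leaf for even $\tau$) without changing the value formula. Once the hitting time probabilities are written down explicitly, the final max-min argument is a one-line monotonicity observation.
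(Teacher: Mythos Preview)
Your argument has a genuine gap at the very first reduction step. Lemma~\ref{lemma:dominance_sur} applies only to \emph{leaf} nodes, so it fixes rows $2,\dots,n$ but says nothing about $p_{11}$. The center of a star is not a leaf, yet you jump directly to ``the only remaining design variables are $p_{12},\dots,p_{1n}$ with $\sum_{j=2}^n p_{1j}=1$'', which silently sets $p_{11}=0$. All of your subsequent hitting-time formulas (the clean alternation $\mathbb{P}(T_{1j}=2k+1)=(1-p_{1j})^k p_{1j}$, etc.) depend on this: if $p_{11}>0$ the walk no longer alternates between center and leaves, and the geometric-series expressions change.

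The paper treats this as a separate, nontrivial step (``No self loop at center''): given any $P$ with $p_{11}>0$, it constructs $P_1$ with $P_1(1,1)=0$ and $P_1(1,j)=p_{1j}/(1-p_{11})$, and proves by induction on $\tau$ that $\mathbb{P}(T^1_{ij}\le\tau)\ge \mathbb{P}(T_{ij}\le\tau)$ for all relevant $i,j$. Once you supply that argument, the rest of your proof is correct and in fact slightly cleaner than the paper's---you compute $\mathbb{P}(T_{1j}\le\tau)$ and $\mathbb{P}(T_{ij}\le\tau)$ directly and compare them, whereas the paper invokes Lemma~\ref{lemma:dominance_int} to discard the $(1,j)$ attacks and then derives only the leaf-to-leaf probabilities via a recursion. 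Both routes arrive at the same max--min over $\min_j p_{1j}$.
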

\begin{proof}
For the directed star graph $\mathcal{G}$, the Markov chain $P$ corresponding to $\mathcal{G}$ has the following general structure,
\begin{equation*}
P=\begin{bmatrix}
p_{11}&p_{12}&p_{13}&\cdots&p_{1n}\\
p_{21}&p_{22}&0&\cdots&0\\
p_{31}&0&p_{33}&\cdots&0\\
\vdots&\vdots&\vdots&\cdots&\vdots\\
p_{n1}&0&0&\cdots&p_{nn}
\end{bmatrix}.
\end{equation*}
Since node $2$ to node $n$ are leaf nodes, by Lemma~\ref{lemma:dominance_sur}, the optimal Markov chain does not have self loops at these nodes and we can reduce $P$ to
\begin{equation}\label{eq:reducedP}
P=\begin{bmatrix}
p_{11}&p_{12}&p_{13}&\cdots&p_{1n}\\
1&0&0&\cdots&0\\
1&0&0&\cdots&0\\
\vdots&\vdots&\vdots&\cdots&\vdots\\
1&0&0&\cdots&0
\end{bmatrix}.
\end{equation}
Note that the strategies $(1,j)$ are dominated for the intruder for all
$j\in\{2,\dots,n\}$ by Lemma~\ref{lemma:dominance_int}, and the capture
probabilities $\mathbb{P}(T_{i1}\leq \tau)=1$ for all $i\in\{1,\dots,n\}$
and $\tau\geq2$. Therefore, we only need to find a $P$ in~\eqref{eq:reducedP}
that maximizes $\min_{i,j\in\{2,\dots,n\}}\mathbb{P}(T_{ij}\leq\tau)$. We divide the rest of the proof into two parts. In the first part, we show that there is no self loop at the center node for the
optimal solution, i.e., $p_{11}=0$ in~\eqref{eq:reducedP}, and further reduce $P$; in the second part, we obtain the optimal solution.

\paragraph*{No self loop at center} We construct $P_1$ to be the same as $P$ in~\eqref{eq:reducedP} except for the first row where
\begin{equation*}
P_1(1,1)=0, ~P_1(1,j)=\frac{p_{1j}}{1-p_{11}}\textup{ for }j\in\{2,\dots,n\}.
\end{equation*}
We show that $P_1$ is a better strategy than $P$ in~\eqref{eq:reducedP} by induction on $\tau$, i.e.,  for all $i,j\in\{2,\dots,n\}$, the capture probabilities $\mathbb{P}(T_{ij}^1\leq\tau)$ for $P_1$ is greater than $\mathbb{P}(T_{ij}\leq\tau)$ of $P$. When $\tau=2$, we have $\mathbb{P}(T_{ij}^1\leq2)=\frac{p_{1j}}{1-p_{11}}>p_{1j}=\mathbb{P}(T_{ij}\leq2)$. Suppose when $\tau \leq t$, we have $\mathbb{P}(T_{ij}^1\leq\tau)>\mathbb{P}(T_{ij}\leq\tau)$, then for $\tau=t+1$,
\begin{align*}
\mathbb{P}(T_{ij}^1\leq t+1)&=\mathbb{P}(T_{1j}^1\leq t)\\
&=\frac{p_{1j}}{1-p_{11}}+\sum_{k\notin \{1,j\}}\frac{p_{1k}}{1-p_{11}}\mathbb{P}(T_{kj}^1\leq t-1)\\
&>\frac{p_{1j}}{1-p_{11}}+\sum_{k\notin \{1,j\}}\frac{p_{1k}}{1-p_{11}}\mathbb{P}(T_{kj}\leq t-1)\\
&=\frac{1}{1-p_{11}}(p_{1j}+\sum_{k\notin \{1,j\}}p_{1k}\mathbb{P}(T_{1j}\leq t-1))\\
&=\frac{1}{1-p_{11}}(\mathbb{P}(T_{1j}\leq t)-p_{11}\mathbb{P}(T_{1j}\leq t-1))\\
&\geq\frac{1}{1-p_{11}}(\mathbb{P}(T_{1j}\leq t)-p_{11}\mathbb{P}(T_{1j}\leq t))\\
&=\mathbb{P}(T_{1j}\leq t)\\
&=\mathbb{P}(T_{ij}\leq t+1),
\end{align*}
where the first inequality follows from the induction hypothesis and the second inequality follows from the fact that $\mathbb{P}(T_{1j}\leq t-1)\leq\mathbb{P}(T_{1j}\leq t)$. Therefore, we conclude that the optimal strategy does not have a self loop at the center node.

\paragraph*{Optimal solution}
Note that for any $\tau\geq1$ and $j\in\{2,\dots,n\}$, we have that
\begin{align*}
\mathbb{P}(T_{1j}\leq\tau)&=p_{1j}+\sum_{ k\notin\{1,j\}}p_{1k}\mathbb{P}(T_{kj}\leq\tau-1)\\
&=p_{1j}+\sum_{ k\notin\{1,j\}}p_{1k}\mathbb{P}(T_{1j}\leq\tau-2)\\
&=p_{1j}+(1-p_{1j})\mathbb{P}(T_{1j}\leq\tau-2),
\end{align*}
with the initial condition $\mathbb{P}(T_{1j}\leq1)=\mathbb{P}(T_{1j}\leq2)=p_{1j}$. Therefore, the capture probability $\mathbb{P}(T_{1j}\leq\tau)$ satisfies
\begin{equation}\label{eq:starcases}
\mathbb{P}(T_{1j}\leq\tau)=\begin{cases}
1-(1-p_{1j})^{\frac{\tau-1}{2}},\quad&\textup{if }\tau\geq2\textup{ is odd},\\
1-(1-p_{1j})^{\frac{\tau}{2}},\quad&\textup{if }\tau\geq2\textup{ is even}.
\end{cases}
\end{equation}
By~\eqref{eq:starcases}, we have for odd $\tau\geq2$, 
\begin{align*}
\min_{i,j\in\{2,\dots,n\}}\mathbb{P}(T_{ij}\leq\tau)&=\min_{j\in\{2,\dots,n\}}\mathbb{P}(T_{1j}\leq\tau-1)\\
&=\min_{j\in\{2,\dots,n\}}1-(1-p_{1j})^{\frac{\tau-1}{2}}\\
&=1-\max_{j\in\{2,\dots,n\}}(1-p_{1j})^{\frac{\tau-1}{2}}\\
&=1-(1-\min_{j\in\{2,\dots,n\}}p_{1j})^{\frac{\tau-1}{2}},
\end{align*}
which along with the fact that $\sum_{j=2}^np_{1j}=1$ implies that~\eqref{eq:staroptimal} is the optimal solution for the directed star graph.
\end{proof}

\subsection{Optimal solution for line graphs}
In this subsection, we derive the optimal surveillance strategy for directed line graphs. In an $n$-node line graph, Problem~\ref{prob:optimalstrategy} is interesting only when the attack duration $\tau$ satisfies $n-1\leq\tau\leq 2n-3$. If $\tau<n-1$, the omniscient intruder always succeeds by attacking an end node when the surveillance agent is at the other end; if $\tau>2n-3$, the surveillance agent who walks back and forth between two ends of the line graph (a deterministic sweeping) 
captures the omniscient intruder no matter how it attacks. Therefore, we  consider only cases when $n-1\leq \tau\leq 2n-3$. Note that a sweeping strategy fails as long as $\tau< 2n-3$, because the intruder could attack an end node immediately after the surveillance agent just leaves that node and it succeeds with probability $1$. We label the nodes in an $n$-node line graph successively from left to right by $(1,\dots,n)$. We need the following conjecture to establish our main result.

\begin{conjecture}[Uniqueness of the optimal strategy]\label{conj:uniqueness}
	Given a directed line graph $\mathcal{G}=(V,\mathcal{E})$ with $n\geq3$ nodes, the optimal solution to Problem~\ref{prob:optimalstrategy} is unique.
\end{conjecture}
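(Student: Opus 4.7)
The plan is to first leverage the two dominance lemmas to reduce the dimension of the optimization, then to argue uniqueness through a first-order optimality analysis combined with a convex-combination/perturbation argument that exploits the multilinear structure of hitting-time probabilities in $P$.

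As a first reduction, Lemma~\ref{lemma:dominance_sur} fixes the two leaf rows: $P(1,2)=1$ and $P(n,n-1)=1$. For each interior node $i\in\{2,\dots,n-1\}$ only the three entries $P(i,i-1), P(i,i), P(i,i+1)$ remain free, so the search collapses to $2(n-2)$ parameters living in a product of $2$-simplices. Lemma~\ref{lemma:dominance_int} then eliminates many intruder pairs; I would use it to isolate an explicit reduced set $\mathcal{A}$ of pure intruder strategies that can possibly be optimal (intuitively, pairs $(i,j)$ with $i,j\ge 2$ where the attack target $j$ lies on the short side of $i$). Reflection symmetry of the line graph can also be exploited to argue that it suffices to establish uniqueness within the class of reflection-symmetric strategies, since if $P^*$ is optimal so is its reflection.

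With the reduced formulation in hand, I would write the KKT conditions at an optimum $P^*$ of value $\mathbb{V}$: there exist nonnegative multipliers $\lambda_{ij}\ge 0$ supported on the active set $\mathcal{A}^*(P^*)=\{(i,j)\in\mathcal{A}:\mathbb{P}(T_{ij}(P^*)\le\tau)=\mathbb{V}\}$ and summing to one, such that $\sum \lambda_{ij}\,\nabla_P\mathbb{P}(T_{ij}\le\tau)$ is orthogonal to the feasible cone at $P^*$. Closed-form gradients are obtained by iterating~\eqref{eq:Fk}. A key intermediate claim is that at any optimum the active set is large and well-distributed: every interior row of $P^*$ must appear as the pivot of some active capture path, or else a local perturbation of that row simultaneously strictly improves all active capture probabilities, contradicting optimality. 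Proving this claim would establish that the KKT conditions furnish at least $2(n-2)$ independent equations in the free parameters.

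To close the uniqueness argument, I would suppose for contradiction that $P^*\neq Q^*$ are two optima and analyze the segment $P_\lambda=\lambda P^*+(1-\lambda)Q^*$, $\lambda\in[0,1]$. For each $(i,j)$, $\lambda\mapsto\mathbb{P}(T_{ij}(P_\lambda)\le\tau)$ is a polynomial of degree at most $\tau$ in $\lambda$. Along the segment, every pair is bounded above by $\mathbb{V}$ (otherwise $P_\lambda$ would outperform the optimum) and equals $\mathbb{V}$ at the endpoints for pairs jointly active at $P^*$ and $Q^*$. The goal is to combine the resulting polynomial identities with the strict coupling of the line-graph hitting recursion in~\eqref{eq:RecursiveInVec} to force $P^*=Q^*$. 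The main obstacle is the genuine non-concavity of the max-min objective together with the combinatorial explosion of possible active-set patterns, which may even change at breakpoints along the segment. Numerical identification of $\mathcal{A}^*$ and of structural invariants of $P^*$ (e.g., node-wise balance equations among active hitting probabilities) for small values of $(n,\tau)$ would be a valuable companion to guide the general proof and to rule out degenerate configurations in which the KKT system fails to uniquely determine $P^*$.
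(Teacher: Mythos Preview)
The statement you are attempting is labeled a \emph{conjecture} in the paper and is not proved there. The paper offers only partial evidence (Remark~\ref{rk:uniqueness}): analytic verification for $n=3$ (where the line is also a star, so Theorem~\ref{thm:star} applies) and for $\tau\in\{n-1,n\}$ (via the AM--GM product bound~\eqref{eq:lineproduct}), together with Monte Carlo sampling for two further $(n,\tau)$ pairs. There is thus no proof in the paper to compare against; you are attacking a problem the authors left open.

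Your plan has genuine gaps. First, the symmetry reduction is circular: from ``if $P^*$ is optimal then so is its reflection'' you cannot conclude that it suffices to prove uniqueness among reflection-symmetric strategies---if $P^*$ differed from its reflection you would already have two optima, neither of them symmetric. Second, you work with an unnecessarily large parametrization and an overly rich intruder set $\mathcal{A}$: the paper shows (in the proof of the line-graph theorem) that any optimum has \emph{no self-loops anywhere} and that the only undominated intruder pairs are $(1,n)$ and $(n,1)$. The problem therefore reduces to maximizing $\min\{\mathbb{P}(T_{1n}\le\tau),\mathbb{P}(T_{n1}\le\tau)\}$ over $x_1,\dots,x_{n-2}\in(0,1)$, with active set of size exactly two (Lemma~\ref{lemma:linenecessary}). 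Your hope that ``every interior row must appear as the pivot of some active capture path,'' yielding $2(n-2)$ independent constraints, is thus misplaced: you get only the $n-2$ stationarity equations plus two active constraints, and equality of equation and unknown counts in a nonlinear polynomial system does not by itself yield uniqueness. Third, the convex-combination argument is incomplete: $\lambda\mapsto\mathbb{P}(T_{ij}(P_\lambda)\le\tau)$ is not concave in general, so matching the value $\mathbb{V}$ at the endpoints while the minimum over pairs stays $\le\mathbb{V}$ along the segment does not force $P^*=Q^*$. The substantive difficulty---showing that the reduced optimality system has a unique root in $(0,1)^{n-2}$ for every admissible $\tau$---is precisely what the paper could not settle.
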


We provide evidence for Conjecture~\ref{conj:uniqueness} in Remark~\ref{rk:uniqueness}.

\begin{theorem}[Optimal solution in line graph]
	Given a directed line graph $\mathcal{G}=(V,\mathcal{E})$ with $n\geq3$ nodes, if Conjecture~\ref{conj:uniqueness} holds true, then the optimal strategy $P^*$ is given by
	\begin{equation}\label{eq:lineoptimal}
	P^*=\begin{bmatrix}
	0&1&0&\cdots&0\\
	0.5&0&0.5&\cdots&0\\
	\vdots&\ddots&\ddots&\ddots&\vdots\\
	0&\cdots&0.5&0&0.5\\
	0&\cdots&0&1&0\\
	\end{bmatrix}.
	\end{equation}
\end{theorem}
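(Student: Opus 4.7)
The plan is to shrink the space of candidate optimal strategies through a sequence of structural reductions, and then close the argument with a variational step that is the main technical hurdle. First, I apply Lemma~\ref{lemma:dominance_sur} to the two leaves $1$ and $n$: their unique neighbors are $2$ and $n-1$, so any optimal $P^*$ must satisfy $P^*(1,2) = P^*(n,n-1) = 1$, matching the first and last rows of \eqref{eq:lineoptimal}. Second, I argue that the optimal strategy carries no self-loops at interior nodes, adapting the ``no self-loop at center'' step from the proof of Theorem~\ref{thm:star}: given any strategy $P$ with $P(i,i) > 0$ for some $1 < i < n$, redistribute the self-loop probability proportionally onto the two neighboring edges and show by induction on $\tau$ that every capture probability $\mathbb{P}(T_{kl} \leq \tau)$ weakly improves, strictly so for at least one relevant pair. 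This leaves a family parameterized by $p_i := P^*(i,i-1) \in [0,1]$ for $i = 2,\ldots,n-1$, with $P^*(i,i+1) = 1 - p_i$.

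Next I exploit the left--right symmetry of the line graph together with Conjecture~\ref{conj:uniqueness}. The relabeling $\sigma: i \mapsto n+1-i$ is a graph automorphism, so for any strategy $P$ the reflected strategy $P^\sigma(i,j) := P(n+1-i, n+1-j)$ attains the same worst-case capture probability. Uniqueness then forces $P^* = (P^*)^\sigma$, i.e.\ the palindromic condition $p_i + p_{n+1-i} = 1$ (and, in particular, $p_{(n+1)/2} = 1/2$ when $n$ is odd). In parallel, Lemma~\ref{lemma:dominance_int} sharply restricts the intruder: for any interior $i$ and target $j \neq i$, every path from the farther endpoint to $j$ traverses $i$, so the attack $(i,j)$ is dominated by an attack from that endpoint; together with Lemma~\ref{coro:intruder_dominance}, the set of relevant attack pairs collapses to $\{(1,j) : j = 3,\ldots,n\}$ together with its reflection $\{(n,j) : j = 1,\ldots,n-2\}$, which coincide under the palindromic symmetry.

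It remains to show that, on the palindromic slice $\{p_i + p_{n+1-i} = 1\}$, the assignment $p_i \equiv 1/2$ simultaneously maximizes $\min_j \mathbb{P}(T_{1j} \leq \tau)$. Unrolling the recursion \eqref{eq:Fk}, each such capture probability is a polynomial in the $p_i$'s, and I would analyze a first-order perturbation $p_i \to 1/2 + \varepsilon$ (with $p_{n+1-i} \to 1/2 - \varepsilon$ to stay on the palindromic slice) and show that at least one capture probability in the relevant family strictly decreases, thereby lowering the minimum; uniqueness then pins $P^*$ down to \eqref{eq:lineoptimal}. The main obstacle is precisely this variational step: unlike the star graph where each capture probability depends on a single transition parameter, here the $p_i$'s couple nonlinearly into every $\mathbb{P}(T_{1j} \leq \tau)$, and establishing simultaneous optimality along the palindromic slice likely requires a gambler's-ruin-style first-passage analysis on a birth--death chain with partially absorbing endpoints, together with monotonicity of these first-passage probabilities in the bias parameters $p_i$.
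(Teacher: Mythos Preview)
Your structural reductions---forcing the leaf rows via Lemma~\ref{lemma:dominance_sur}, eliminating interior self-loops by proportional redistribution and induction on $\tau$, and pruning dominated intruder strategies via Lemmas~\ref{lemma:dominance_int} and~\ref{coro:intruder_dominance}---are correct and parallel the paper. The intruder reduction can in fact be pushed further: applying part~\ref{item:post} of Lemma~\ref{lemma:dominance_int} as well shows that $(1,j)$ is dominated by $(1,n)$ for every $j<n$, so the objective collapses all the way to $\min\{\mathbb{P}(T_{1n}\le\tau),\mathbb{P}(T_{n1}\le\tau)\}$, which is the reduction the paper actually uses.

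The genuine gap is exactly where you flag it. The relabeling automorphism $\sigma:i\mapsto n{+}1{-}i$ sends the parameter vector $(x_1,\dots,x_{n-2})$ (with $x_i=P(i{+}1,i{+}2)$) to $(1-x_{n-2},\dots,1-x_1)$, so uniqueness yields only the palindromic constraint $x_i+x_{n-1-i}=1$, leaving a nontrivial family on which you propose---but do not carry out---a perturbation argument. The paper sidesteps this entirely by establishing a \emph{second}, non-obvious symmetry: $f(x_1,\dots,x_{n-2})=f(1-x_1,\dots,1-x_{n-2})$, flipping every left/right bias \emph{without} reversing their order. Combined with uniqueness this forces $x_i=1-x_i$ for each $i$ directly, hence $x_i=\tfrac12$, and no variational step is needed. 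The substance of this second symmetry (Appendix~\ref{appdix:proof1}) is that $\mathbb{P}(T_{1n}\le\tau)$ is unchanged when the parameter tuple is reversed; the proof shows that the two tridiagonal $(n{-}1)\times(n{-}1)$ matrices obtained by deleting the absorbing column share the same characteristic polynomial (via a recursion on principal minors), and then invokes Cayley--Hamilton together with the trivial observation that $\mathbb{e}_1^\top A^k\mathbb{1}_{n-1}=1$ for the first $n{-}2$ powers. Your gambler's-ruin perturbation route may be workable, but it is the harder road; the algebraic identity just described is the missing idea that closes the argument in one stroke.
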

\begin{proof}
We divide the proof into three parts. In the first part, we show that in the line graph, the optimal strategy for the intruder is to attack one end of the graph when the surveillance agent is at the other, in which case the objective function in Problem~\ref{prob:optimalstrategy} becomes $\min\{\mathbb{P}(T_{1n}\leq \tau),\mathbb{P}(T_{n1}\leq\tau)\}$. In the second part, we show that there are no self loops at any locations in the optimal surveillance strategy. In the last part, we obtain the optimal strategy by using Conjecture~\ref{conj:uniqueness} and a symmetry argument.
	
\paragraph*{Attack the end nodes} For $i,j\in V$ and $i< j$, by Lemma \ref{lemma:dominance_int}, we know that $\mathbb{P}(T_{ij}\leq\tau)\geq\mathbb{P}(T_{1j}\leq\tau)\geq\mathbb{P}(T_{1n}\leq\tau)$; on the other hand, for $i>j$, we have $\mathbb{P}(T_{ij}\leq\tau)\geq\mathbb{P}(T_{nj}\leq\tau)\geq\mathbb{P}(T_{n1}\leq\tau)$. Therefore, attacking any location in the middle while the surveillance agent is at another location in the middle is not optimal for the intruder. Since node $1$ and $n$ are leaf nodes, by Lemma~\ref{coro:intruder_dominance}, it is not optimal for the intruder to attack node $1$ or $n$ immediately after the surveillance agent leaves that node. Next, we show that attacking any node $i\in\{2,\dots,n-1\}$ immediately when the surveillance agent leaves node $i$ is dominated by attacking an end node when the surveillance agent is visiting the other. For $i\in\{2,\dots,n-1\}$,
\begin{align*}
\mathbb{P}(T_{ii}\leq \tau)&=p_{ii} + p_{i,i+1}\mathbb{P}(T_{i+1,i}\leq \tau-1) \\
&\quad+ p_{i,i-1}\mathbb{P}(T_{i-1,i}\leq \tau-1)\\
&\geq p_{ii} + p_{i,i+1}\mathbb{P}(T_{n1}\leq \tau) + p_{i,i-1}\mathbb{P}(T_{1n}\leq \tau)\\
&\geq \min\{1,\mathbb{P}(T_{n1}\leq \tau),\mathbb{P}(T_{1n}\leq \tau)\}\\
&= \min\{\mathbb{P}(T_{n1}\leq \tau),\mathbb{P}(T_{1n}\leq \tau)\},
\end{align*}
where the first inequality follows from the facts that 
\begin{align*}
&\quad\mathbb{P}(T_{n1}\leq \tau)\\&=\sum_{t_1,t_2}\mathbb{P}(T_{n,i+1}= t_1)\mathbb{P}(T_{i+1,i}\leq \tau-t_1-t_2)\mathbb{P}(T_{i1}=t_2)\\
&\leq\mathbb{P}(T_{i+1,i}\leq \tau-1),
\end{align*}
and
\begin{align*}
&\quad\mathbb{P}(T_{1n}\leq \tau)\\&=\sum_{t_1,t_2}\mathbb{P}(T_{1,i}= t_1)\mathbb{P}(T_{i,i+1}\leq \tau-t_1-t_2)\mathbb{P}(T_{i+1,n}=t_2)\\
&\leq\mathbb{P}(T_{i,i+1}\leq \tau-1).
\end{align*}
Therefore, the best strategy for the omniscient intruder is to attack an end node when the surveillance agent is at the other. Problem~\ref{prob:optimalstrategy} becomes $\max_{P}\min\{\mathbb{P}(T_{1n}\leq \tau),\mathbb{P}(T_{n1}\leq\tau)\}$.
	
\paragraph*{No self loop at any location} Fist, by Lemma~\ref{lemma:dominance_sur}, since nodes $1$ and $n$ are leaf nodes, the optimal strategy $P^*$ for the surveillance agent must satisfy $P^*(1,1)=0$, $P^*(1,2)=1$, $P^*(n,n)=0$ and $P^*(n,n-1)=1$. Next, we focus on $i\in\{2,\dots,n-1\}$. Let $P$ be any Markov chain strategy corresponding to the line graph with $p_{ii}>0$, and $P_1$ is the same as $P$ except for the $i$-th row where
\begin{equation*}
P_1(i,i)=0, ~P_1(i,i+1)=\frac{p_{i,i+1}}{1-p_{ii}},~P_1(i,i-1)=\frac{p_{i,i-1}}{1-p_{ii}}.
\end{equation*}
Note that
\begin{align}\label{eq:linerecursion}
\begin{split}
\mathbb{P}(T_{1n}\leq\tau)&=\sum_{t_1=i-1}^{\tau}\mathbb{P}(T_{1i}= t_1)\mathbb{P}(T_{in}\leq \tau- t_1),\\
\mathbb{P}(T_{n1}\leq\tau)&=\sum_{t_1=n-i}^{\tau}\mathbb{P}(T_{ni}=t_1)\mathbb{P}(T_{i1}\leq \tau- t_1).
\end{split}
\end{align}
Since $P$ and $P_1$  differ only by row $i$, their first hitting time probabilities satisfy $\mathbb{P}(T_{1i}= t_1)=\mathbb{P}(T^1_{1i}= t_1)$ and  $\mathbb{P}(T_{ni}= t_1)=\mathbb{P}(T^1_{ni}= t_1)$ for all $t_1\geq1$. We first prove that $\mathbb{P}(T_{in}\leq \tau)\leq\mathbb{P}(T^1_{in}\leq \tau)$ for all $\tau$ by induction. When $\tau=n-i$, since $P_1(i,i+1)>p_{i,i+1}$, we have $\mathbb{P}(T_{in}\leq n-i)\leq\mathbb{P}(T^1_{in}\leq n-i)$. Suppose for all $\tau\leq  t$, we have $\mathbb{P}(T_{in}\leq  t)\leq\mathbb{P}(T^1_{in}\leq  t)$. Then, when $\tau= t+1$,

\begin{align*}
&\quad\mathbb{P}(T^1_{in}\leq  t+1)\\
&=\frac{p_{i,i-1}}{1-p_{ii}}\mathbb{P}(T^1_{i-1,n}\leq  t)+\frac{p_{i,i+1}}{1-p_{ii}}\mathbb{P}(T^1_{i+1,n}\leq  t),\\
&\geq \frac{p_{i,i-1}}{1-p_{ii}}\mathbb{P}(T_{i-1,n}\leq  t)+\frac{p_{i,i+1}}{1-p_{ii}}\mathbb{P}(T_{i+1,n}\leq  t)\\
&=\frac{1}{1-p_{ii}}(\mathbb{P}(T_{in}\leq  t+1)-p_{ii}\mathbb{P}(T_{in}\leq  t))\\
&\geq\frac{1}{1-p_{ii}}(\mathbb{P}(T_{in}\leq  t+1)-p_{ii}\mathbb{P}(T_{in}\leq  t+1))\\
&=\mathbb{P}(T_{in}\leq  t+1),
\end{align*}
where the first inequality follows from the hypothesis induction. A similar proof by induction shows that  $\mathbb{P}(T_{ni}\leq \tau)\leq\mathbb{P}(T^1_{ni}\leq \tau)$ for all $\tau$. Then, by~\eqref{eq:linerecursion}, we have that $\mathbb{P}(T_{n1}\leq \tau)\leq\mathbb{P}(T^1_{n1}\leq \tau)$ and $\mathbb{P}(T_{1n}\leq \tau)\leq\mathbb{P}(T^1_{1n}\leq \tau)$ and therefore $P_1$ is a better strategy than $P$. In summary, we have that the optimal surveillance strategy does not have self loop at any location.

\paragraph*{Optimal solution} By the first two parts, we conclude that the optimal strategy for the surveillance agent has the following general structure,
\begin{equation}\label{eq:generalP}
P=\begin{bmatrix}
0&1&0&\cdots&0\\
1-x_1&0&x_1&\cdots&0\\
\vdots&\ddots&\ddots&\ddots&\vdots\\
0&\cdots&1-x_{n-2}&0&x_{n-2}\\
0&\cdots&0&1&0\\
\end{bmatrix},
\end{equation}
and thus the objective function in Problem~\ref{prob:optimalstrategy} can be parameterized by $0<x_1<1,\dots,0<x_{n-2}<1$. Let
\begin{equation*}
f(x_1,\dots,x_{n-2})=\min_{x_1,\dots,x_{n-2}}\{\mathbb{P}(T_{1n}\leq \tau),\mathbb{P}(T_{n1}\leq\tau)\}.
\end{equation*}
For the function $f$, following the proof in Appendix~\ref{appdix:proof1}, we have 
\begin{equation*}
f(x_1,\dots,x_{n-2})=f(1-x_1,\dots,1-x_{n-2}).
\end{equation*}
If Conjecture~\ref{conj:uniqueness} holds, then at the optimal solution, we must have $x^*_i=1-x^*_{i}$ for all $i\in\{1,\dots,n-2\}$,
which implies $x_{i}^*=\frac{1}{2}$. Therefore, the optimal solution is given by~\eqref{eq:lineoptimal}. 
\end{proof}



We prove a necessary condition for a strategy to be optimal in line graphs in Lemma~\ref{lemma:linenecessary} in Appendix~\ref{appendix:linenecessary}, which says that the optimal strategy must satisfy $\mathbb{P}(T^*_{1n}\leq\tau)=\mathbb{P}(T^*_{1n}\leq\tau)$. We provide evidence for Conjecture~\ref{conj:uniqueness} in the following remark.

\begin{remark}[Evidence for conjecture~\ref{conj:uniqueness}]\label{rk:uniqueness}
We first consider two tractable cases: $n=3$ and $\tau=n-1$ or $\tau=n$. When $n=3$, the line graph is also a star graph, and by Theorem~\ref{thm:star}, we have that the optimal solution in unique. The case $\tau=n$ is the same as that of $\tau=n-1$ since the optimal Markov chain in the form~\eqref{eq:generalP} is periodic. For $\tau=n-1$, we have
\begin{align*}
&\mathbb{P}(T_{1n}\leq n-1)=p_{23}p_{34}\cdots p_{n-1,n},\\
&\mathbb{P}(T_{n1}\leq n-1)=p_{n-1,n-2}p_{n-2,n-3}\cdots p_{21},
\end{align*}
and note that
\begin{align}\label{eq:lineproduct}
\begin{split}
&\quad\mathbb{P}(T_{1n}\leq n-1)\mathbb{P}(T_{n1}\leq n-1)\\
&=(p_{21}p_{23})\cdots (p_{n-1,n-2}p_{n-1,n})\\
&\leq \big(\frac{p_{21}+p_{23}}{2}\big)^2\cdots\big(\frac{p_{n-1,n-2}+p_{n-1,n}}{2}\big)^2=\frac{1}{2^{2n-4}}.
\end{split}
\end{align}
There is a unique strategy that achieves the upper bound in~\eqref{eq:lineproduct}: $p_{i,i+1}=p_{i,i-1}=\frac{1}{2}$ for all $i\in\{2,\dots,n-1\}$. Moreover, this strategy satisfies the necessary condition in Lemma~\ref{lemma:linenecessary}. Therefore, the optimal solution is unique.

As a second justification, following the Monte Carlo probability estimation method in~\cite[Remark V.1]{GN-FB:07z}, we randomly pick $27000$ different Markov chains with the structure~\eqref{eq:generalP} for $n=5$, $\tau=8$ and $n=6$, $\tau=8$. In all these cases, no chain has been found to have a better or same value as that of~\eqref{eq:lineoptimal}. Therefore, we have $99\%$ confidence that the probability of~\eqref{eq:lineoptimal} being optimal is at least $0.99$ for these cases.
\end{remark}

\section{Conclusion}\label{sec:conclusion}
In this paper, we studied a Stackelberg game formulation for the robotic
surveillance problem, where the surveillance agent defends against an
omniscient intruder who decides where and when to attack. We derived an
upper bound on the performance of the surveillance agent and provided
provably suboptimal solution in the complete graph. We derived dominant
strategies and leveraged them to obtain optimal strategies for the star and
the line topology. For future works, we will consider arbitrary graph
topology and heterogeneous attack duration.

\section{Acknowledgement} The authors would like to thank Prof. Silvano Martello and Prof. Michele Monaci for numerous inspiring discussions.

\begin{appendices}
\section{Proof of a symmetry property}\label{appdix:proof1}
	Let $P_1$ be a Markov chain of the form~\eqref{eq:generalP} and $P_2$ be 
	\begin{equation*}
	\begin{bmatrix}
	0&1&0&\cdots&0\\
	x_1&0&1-x_1&\cdots&0\\
	\vdots&\ddots&\ddots&\ddots&\vdots\\
	0&\cdots&x_{n-2}&0&1-x_{n-2}\\
	0&\cdots&0&1&0\\
	\end{bmatrix},
	\end{equation*}
	which is, in terms of the objective function, equivalent to
	\begin{equation}\label{eq:P22}
	P_2=\begin{bmatrix}
	0&1&0&\cdots&0\\
	1-x_{n-2}&0&x_{n-2}&\cdots&0\\
	\vdots&\ddots&\ddots&\ddots&\vdots\\
	0&\cdots&1-x_{1}&0&x_{1}\\
	0&\cdots&0&1&0\\
	\end{bmatrix}.
	\end{equation}
	We work with $P_2$ in~\eqref{eq:P22} for the rest of the proof.
	Let $\mathbb{P}(T_{1n}^1\leq\tau)$ and $\mathbb{P}(T_{1n}^2\leq\tau)$ be the capture probabilities for $P_1$ and $P_2$, respectively. We show that $\mathbb{P}(T_{1n}^1\leq\tau)=\mathbb{P}(T_{1n}^2\leq\tau)$, and a similar proof strategy works for the claim $\mathbb{P}(T_{n1}^1\leq\tau)=\mathbb{P}(T_{n1}^2\leq\tau)$. From~\eqref{eq:RecursiveInVec}, we have that
	\begin{align*}
	\mathbb{P}(T_{1n}^1\leq\tau)&=\mathbb{e}_1^\top\sum_{t=1}^\tau (P_1 - P_1\mathbb{e}_n\mathbb{e}_n^\top)^{t-1}P\mathbb{e}_n\\
	&=\mathbb{e}_1^\top (I_n-(P_1 - P_1\mathbb{e}_n\mathbb{e}_n^\top)^{\tau})\\
	&\quad\cdot(I_n-(P_1 - P_1\mathbb{e}_n\mathbb{e}_n^\top))^{-1}P\mathbb{e}_n\\
	&=\mathbb{e}_1^\top(I_{n-1}-A^\tau)(I_{n-1}-A)^{-1}b,
	\end{align*}
	where $b=x_{n-2}\mathbb{e}_{n-1}$ and 
	\begin{equation*}
	A=\begin{bmatrix}
	0&1&0&\cdots&0\\
	1-x_1&0&x_1&\cdots&0\\
	\vdots&\ddots&\ddots&\ddots&\vdots\\
	0&\cdots&1-x_{n-3}&0&x_{n-3}\\
	0&\cdots&0&1-x_{n-2}&0\\
	\end{bmatrix}.
	\end{equation*}
	Note that $(I_{n-1}-A)\mathbb{1}_{n-1}=b$. Therefore, we have
	\begin{equation}\label{eq:captureP1}
	\mathbb{P}(T_{1n}^1\leq\tau)=\mathbb{e}_1^\top(I_{n-1}-A^{\tau})\mathbb{1}_{n-1}.
	\end{equation}
	Similarly, for $P_2$, we have
	\begin{equation}\label{eq:captureP2}
	\mathbb{P}(T_{1n}^2\leq\tau)=\mathbb{e}_1^\top(I_{n-1}-B^{\tau})\mathbb{1}_{n-1},
	\end{equation}
	where
	\begin{equation*}
	B=\begin{bmatrix}
	0&1&0&\cdots&0\\
	1-x_{n-2}&0&x_{n-2}&\cdots&0\\
	\vdots&\ddots&\ddots&\ddots&\vdots\\
	0&\cdots&1-x_{2}&0&x_{2}\\
	0&\cdots&0&1-x_{1}&0\\
	\end{bmatrix}.
	\end{equation*}
	In order to show $\mathbb{P}(T_{1n}^1\leq\tau)=\mathbb{P}(T_{1n}^2\leq\tau)$, by~\eqref{eq:captureP1} and \eqref{eq:captureP2}, we only need to prove that for all $\tau\geq1$,
	\begin{equation}\label{eq:powerequal}
	\mathbb{e}_1^\top A^{\tau}\mathbb{1}_{n-1}=\mathbb{e}_1^\top B^{\tau}\mathbb{1}_{n-1}.
	\end{equation}
	First, note that for all $\tau\leq n-2$, we have
	\begin{equation}\label{eq:powerequalsmall}
	\mathbb{e}_1^\top A^{\tau}\mathbb{1}_{n-1}=\mathbb{e}_1^\top B^{\tau}\mathbb{1}_{n-1}=1.
	\end{equation}
	We next show that $A$ and $B$ have the same characteristic
        polynomials, and then by the Cayley-Hamilton and
        \eqref{eq:powerequalsmall}, we will
        have~\eqref{eq:powerequal}. Since both $A$ and $B$ are tridiagonal
        matrices, the characteristic polynomials of $A$ and $B$ can be
        generated as follows \cite[equation~(2.3)]{MEAE:04}. Note that $A$
        and $B$ are of order $n-1$. Let
        $g_0^{n-1}(\lambda)=h_0^{n-1}(\lambda)=1$,
        $g_1^{n-1}(\lambda)=h_1^{n-1}(\lambda)=\lambda$, where the
        superscript indicates the order of the matrices, and for
        $k=2,\dots,n-1$,
	\begin{align}\label{eq:ghrecursive}
	\begin{split}
	g_k^{n-1}(\lambda)&=\lambda g_{k-1}^{n-1}(\lambda) - x_{k-2}(1-x_{k-1})g_{k-2}^{n-1}(\lambda),\\
	h_k^{n-1}(\lambda)&=\lambda h_{k-1}^{n-1}(\lambda) - x_{k}(1-x_{k-1})h_{k-2}^{n-1}(\lambda),
	\end{split}
	\end{align}
	where $x_0=x_{n-1}=1$ and we obtain the recurrence $h_{k}^{n-1}$ for $B$ starting from the bottom right of $B$ matrix. Then $g_{n-1}^{n-1}(\lambda)$ and $h_{n-1}^{n-1}(\lambda)$ are the characteristic polynomials for $A$ and $B$, respectively. Moreover, notice that 
	\begin{align*}
	g_{n}^{n}&=\begin{vmatrix}
	\lambda I_{n-1} -A &-x_{n-2}\mathbb{e}_{n-1}\\
	-(1-x_{n-1})\mathbb{e}_{n-1}^\top&\lambda
	\end{vmatrix}\\
	&=\lambda g_{n-1}^{n-1}-x_{n-2}(1-x_{n-1})g_{n-2}^{n-1},
	\end{align*}
	and $g_{n-1}^n=g_{n-1}^{n-1}$. Thus,
	\begin{align}\label{eq:gtog}
	\begin{split}
	\begin{bmatrix}
	g_{n}^{n}(\lambda)\\
	g_{n-1}^{n}(\lambda)
	\end{bmatrix}&=
	\begin{bmatrix}
	\lambda&- x_{n-2}(1-x_{n-1})\\
	1&0
	\end{bmatrix}\begin{bmatrix}
	g_{n-1}^{n-1}(\lambda)\\
	g_{n-2}^{n-1}(\lambda)
	\end{bmatrix}	
	\end{split}.
	\end{align}
	At the same time, note that
	\begin{align}\label{eq:hrecursive}
	\begin{split}
	h_{n-1}^{n}&=|\lambda I_{n-1}-B+(1-x_{n-1})\mathbb{e}_{1}\mathbb{e}_{2}^\top|\\
	&=\lambda h_{n-2}^{n-1}-(1-x_{n-2})x_{n-1}h_{n-3}^{n-1},\\
	h_{n-2}^{n}&=h^{n-1}_{n-2}.	
	\end{split}
	\end{align}
	Thus,
	\begin{align}\label{eq:htoh}	
	\begin{split}
	\begin{bmatrix}
	h_{n}^{n}(\lambda)\\
	h_{n-1}^{n}(\lambda)
	\end{bmatrix}&=
	\begin{bmatrix}
	\lambda&-(1-x_{n-1})\\
	1&0
	\end{bmatrix}\begin{bmatrix}
	h_{n-1}^{n}(\lambda)\\
	h_{n-2}^{n}(\lambda)
	\end{bmatrix}\\
	&=
	\begin{bmatrix}
	\lambda&-(1-x_{n-1})\\
	1&0
	\end{bmatrix}\\
	&\quad\cdot\begin{bmatrix}
	\lambda&-x_{n-1}(1-x_{n-2})\\
	1&0
	\end{bmatrix}\begin{bmatrix}
	h_{n-2}^{n-1}(\lambda)\\
	h_{n-3}^{n-1}(\lambda)
	\end{bmatrix}\\
	&=
	\begin{bmatrix}
	\lambda&-(1-x_{n-1})\\
	1&0
	\end{bmatrix}\begin{bmatrix}
	\lambda&-x_{n-1}(1-x_{n-2})\\
	1&0
	\end{bmatrix}\\
	&\quad\cdot\begin{bmatrix}
	\lambda&-(1-x_{n-2})\\
	1&0
	\end{bmatrix}^{-1}\begin{bmatrix}
	h_{n-1}^{n-1}(\lambda)\\
	h_{n-1}^{n-2}(\lambda)
	\end{bmatrix}\\
	&=\begin{bmatrix}
	\lambda x_{n-1}&(\lambda^2-1)(1-x_{n-1})\\
	x_{n-1}&\lambda(1-x_{n-1})
	\end{bmatrix}\begin{bmatrix}
	h_{n-1}^{n-1}(\lambda)\\
	h_{n-2}^{n-1}(\lambda)
	\end{bmatrix},
	\end{split}
	\end{align}
	where the second and third equalities follow from~\eqref{eq:hrecursive} and~\eqref{eq:ghrecursive}, respectively.
In the following, we prove that
	\begin{align}\label{eq:byinductionmain}
	\begin{split}
	&g_{n-1}^{n-1}(\lambda)=h_{n-1}^{n-1}(\lambda),\\
	&\lambda g_{n-1}^{n-1}(\lambda) = x_{n-2}g_{n-2}^{n-1}(\lambda) + (\lambda^2-1)h_{n-2}^{n-1},	
	\end{split}
	\end{align}
by induction on $n$. When $n=3$, by~\eqref{eq:ghrecursive}, we have
	\begin{align*}
&g_{2}^{2}(\lambda)=\lambda g_{1}^{2}(\lambda) - x_{0}(1-x_{1})g_{0}^{2}(\lambda)=
\lambda^2-(1-x_1),\\
&h_2^{2}(\lambda)=\lambda h_{1}^{2}(\lambda) - x_{2}(1-x_{1})h_{0}^{2}(\lambda)=\lambda^2-(1-x_1),
%
%
%
%
\end{align*}
and
\begin{align*}
\lambda g_{2}^{2}(\lambda)=\lambda^3-\lambda(1-x_1)=x_{1}g_{1}^{2}(\lambda) + (\lambda^2-1)h_{1}^{2}.
\end{align*}
Suppose~\eqref{eq:byinductionmain} holds for $n=k$, then when $n=k+1$,
\begin{align*}
	&\quad g_{k}^{k}(\lambda)-h_{k}^{k}(\lambda)\\
	&=\lambda g_{k-1}^{k-1}(\lambda)- x_{k-2}(1-x_{k-1})g_{k-2}^{k-1}(\lambda)\\
	&\quad-\lambda x_{k-1} h_{k-1}^{k-1}(\lambda)-(\lambda^2-1)(1-x_{k-1})h_{k-2}^{k-1}(\lambda)\\
	&=x_{k-2}x_{k-1}g_{k-2}^{k-1}(\lambda)-\lambda x_{k-1} h_{k-1}^{k-1}(\lambda)\\
	&\quad+(\lambda^2-1)x_{k-1}h_{k-2}^{k-1}(\lambda)\\
	&=\lambda x_{k-1} g_{k-1}^{k-1}(\lambda)-\lambda x_{k-1} h_{k-1}^{k-1}(\lambda)=0,
	\end{align*}
	where the first equality follows from~\eqref{eq:gtog} and~\eqref{eq:htoh}, the second and third follow from induction hypothesis. Moreover,
	\begin{align*}
	&\quad\lambda g_{k}^{k}(\lambda) - x_{k-1}g_{k-1}^{k}(\lambda) - (\lambda^2-1)h_{k-1}^{k}\\
	&=(\lambda^2-x_{k-1})g_{k-1}^{k-1}(\lambda)-\lambda x_{k-2}(1-x_{k-1})g_{k-2}^{k-1}(\lambda)\\
	&\quad-(\lambda^2-1)(x_{k-1}h_{k-1}^{k-1}(\lambda)+\lambda(1-x_{k-1})h_{k-2}^{k-1}(\lambda))\\
	&=(1-x_{k-1})\lambda^2g_{k-1}^{k-1}(\lambda)-\lambda^2(1-x_{k-1})g_{k-1}^{k-1}(\lambda)=0,
	\end{align*}
	where the first equality follows from~\eqref{eq:gtog} and~\eqref{eq:htoh}, and the second follows from the induction hypothesis. The proof is completed.

\section{Necessary optimality condition in line graphs}\label{appendix:linenecessary}
\begin{lemma}[Necessary optimality condition in line graph]\label{lemma:linenecessary}
	Given a line graph $\mathcal{G}=(V,\mathcal{E})$ with $n\geq3$ nodes, if the Markov chain strategy $P^*$ is optimal for the surveillance agent, then it must satisfy 
	$\mathbb{P}(T^*_{1n}\leq\tau)=\mathbb{P}(T^*_{n1}\leq\tau)$.
\end{lemma}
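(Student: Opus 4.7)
The plan is to argue by contradiction via a one-parameter local perturbation. Suppose $P^*$ is optimal yet $\mathbb{P}(T^*_{1n}\leq\tau)\neq\mathbb{P}(T^*_{n1}\leq\tau)$; without loss of generality I assume $\mathbb{P}(T^*_{1n}\leq\tau) > \mathbb{P}(T^*_{n1}\leq\tau)$ (the reverse case is treated symmetrically by perturbing $x_{n-2}$ instead of $x_1$). By the analysis already carried out in the first part of the line-graph theorem, the objective value at $P^*$ equals $\mathbb{P}(T^*_{n1}\leq\tau)$, and by the no-self-loop conclusion $P^*$ has the structure~\eqref{eq:generalP} with parameters $x_1^*,\dots,x_{n-2}^*$. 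I will perturb $x_1^*$ downward and show that the objective strictly increases, contradicting optimality.

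As a preliminary, I would observe that $x_i^*\in(0,1)$ for every $i$: if any $x_i^*$ were $0$ or $1$, one of the two outgoing transitions from node $i+1$ would vanish, making $P^*$ reducible and forcing either $\mathbb{P}(T_{1n}\leq\tau)=0$ or $\mathbb{P}(T_{n1}\leq\tau)=0$, which is clearly suboptimal. Hence the perturbation $x_1^*\mapsto x_1^*-\epsilon$ is feasible for all sufficiently small $\epsilon>0$.

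The core step is the strict monotonicity statement $\partial \mathbb{P}(T_{n1}\leq\tau)/\partial x_1\,\big|_{P^*}<0$. Since every path from $n$ to $1$ in the line graph must first hit node $2$,
\begin{equation*}
\mathbb{P}(T_{n1}\leq\tau)=\sum_{k=n-2}^{\tau-1}\mathbb{P}(T_{n2}=k)\,\mathbb{P}(T_{21}\leq\tau-k).
\end{equation*}
The trajectories contributing to $\{T_{n2}=k\}$ avoid node $2$ until time $k$, so $\mathbb{P}(T_{n2}=k)$ is independent of $x_1$. It therefore suffices to prove that $q_m:=\mathbb{P}(T_{21}\leq m)$ is strictly decreasing in $x_1$ for every $m\geq1$. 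Conditioning on the first step out of node $2$ and writing $r_k:=\mathbb{P}(T_{32}=k)$ (also independent of $x_1$) yields the recursion $q_m=(1-x_1)+x_1\sum_{k=1}^{m-1}r_k q_{m-1-k}$. Induction on $m$, using $\sum_{k=1}^{m-1}r_k<1$ for any finite $m$ (a consequence of irreducibility, which only guarantees the full sum $\sum_{k\geq 1}r_k=1$), then yields $\partial q_m/\partial x_1<0$ strictly.

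Putting the pieces together, let $P(\epsilon)$ be obtained from $P^*$ by replacing $x_1^*$ with $x_1^*-\epsilon$. The strict derivative gives $\mathbb{P}(T_{n1}(\epsilon)\leq\tau)>\mathbb{P}(T^*_{n1}\leq\tau)$ for small $\epsilon>0$, while continuity of the capture probabilities in $P$ preserves the strict ordering $\mathbb{P}(T_{1n}(\epsilon)\leq\tau)>\mathbb{P}(T_{n1}(\epsilon)\leq\tau)$. Hence the objective at $P(\epsilon)$ equals the strictly larger quantity $\mathbb{P}(T_{n1}(\epsilon)\leq\tau)$, contradicting optimality of $P^*$. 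I anticipate the main obstacle to be formalizing the strict derivative inequality: the recursion immediately yields only weak monotonicity, and strictness requires the observation that the reset term $-1+\sum_k r_k q_{m-1-k}$ is itself strictly negative, which in turn reduces to $\sum_{k=1}^{m-1}r_k<1$ at any finite horizon.
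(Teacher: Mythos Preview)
Your high-level strategy coincides with the paper's: prove that $\mathbb{P}(T_{n1}\leq\tau)$ is strictly monotone in an interior transition parameter, perturb that parameter, and use continuity to keep $\mathbb{P}(T_{1n}\leq\tau)>\mathbb{P}(T_{n1}\leq\tau)$ so that the minimum strictly increases, contradicting optimality. The paper establishes the monotonicity for a generic $p_{i,i+1}$ by a direct induction on $\tau$ (and does not first reduce to the no-self-loop form), whereas you specialise to the structure~\eqref{eq:generalP} and factor $\mathbb{P}(T_{n1}\leq\tau)$ through node~$2$; this is a legitimate shortcut since the first-hitting probabilities $\mathbb{P}(T_{n2}=k)$ indeed never involve the transition out of node~$2$.

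There is, however, one concrete gap in your strictness step. You claim $\sum_{k=1}^{m-1} r_k < 1$ for every finite $m$ as ``a consequence of irreducibility'', but this is false when $n=3$: node~$3$ is then a leaf, so $r_1=\mathbb{P}(T_{32}=1)=1$ and the partial sums equal $1$ for all $m\geq 2$. The repair is immediate: argue instead from $q_{m-1-k}<1$, which holds at every finite horizon because from node~$2$ the walk $2\to 3\to 2\to 3\to\cdots$ avoids node~$1$ with positive probability. Then $-1+\sum_{k} r_k\, q_{m-1-k}<-1+\sum_k r_k\leq 0$, and your induction goes through unchanged.
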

\begin{proof}
	We divide the proof into two parts. In the first part, we show a monotonicity property of the hitting times $\mathbb{P}(T_{1n}\leq\tau)$ and $\mathbb{P}(T_{n1}\leq\tau)$. In the second part, we obtain the necessary condition.

\paragraph*{Monotonicity of hitting times} We claim that $\mathbb{P}(T_{1n}\leq \tau)$ is monotonically increasing (decreasing) with $p_{i,i+1}$ ($p_{i,i-1}$), and $\mathbb{P}(T_{n1}\leq \tau)$ is monotonically decreasing (increasing) with $p_{i,i+1}$ ($p_{i,i-1}$). We prove that $\mathbb{P}(T_{1n}\leq \tau)$ is monotonically increasing with $p_{i,i+1}$, and a similar proof works for the other cases. For any $i\in\{2,\dots,n-1\}$, let $P^\epsilon$ be a Markov chain that is the same as $P$ except that $P^{\epsilon}(i,i+1)=P(i,i+1)+\epsilon$ and $P^{\epsilon}(i,i-1)=P(i,i-1)-\epsilon$, where $\epsilon>0$ is small enough such that $P^\epsilon$ remains irreducible. We first show that for $i\in\{2,\dots,n-1\}$, we have $\mathbb{P}(T^\epsilon_{in}\leq \tau)\geq \mathbb{P}(T_{in}\leq \tau)$ by induction. When $\tau=n-i$,
	\begin{align*}
	\mathbb{P}(T^\epsilon_{in}\leq n-i)&=(\epsilon+p_{i,i+1})p_{i+1,i+2}\cdots p_{n-1,n}\\
	&>p_{i,i+1}p_{i+1,i+2}\cdots p_{n-1,n}\\
	&= \mathbb{P}(T_{in}\leq n-i).
	\end{align*}
	Suppose $\mathbb{P}(T^\epsilon_{in}\leq \tau)\geq \mathbb{P}(T^\epsilon_{in}\leq \tau)$ holds for $\tau\leq  t$. For $i\in\{2,\dots,n-2\}$, let $\ell_{i+1,n}^t$ be the set of paths from node $i+1$ to node $n$ that do not contain node $i$ and have length less than or equal to $t$. Then, when $\tau=t+1$, for $i\in\{2,\dots,n-2\}$,
	\begin{align}\label{eq:linefromiton}
	\begin{split}
	&\mathbb{P}(T^\epsilon_{in}\leq  t+1)\\
	&=(p_{i,i-1}-\epsilon)\mathbb{P}(T^\epsilon_{i-1,n}\leq  t)+ p_{i,i}\mathbb{P}(T^\epsilon_{in}\leq  t) \\
	&\quad+ (p_{i,i+1}+\epsilon)\mathbb{P}(T^\epsilon_{i+1,n}\leq  t)\\
	&=(p_{i,i-1}-\epsilon)\sum_{t_1=1}^{ t}\mathbb{P}(T^\epsilon_{i-1,i}=t_1)\mathbb{P}(T^\epsilon_{in}\leq  t-t_1)\\
	&\quad+p_{i,i}\mathbb{P}(T^\epsilon_{in}\leq  t)+ (p_{i,i+1}+\epsilon)\mathbb{P}(\ell_{i+1,n}^t)\\
	&\quad+(p_{i,i+1}+\epsilon)\sum_{t_1=1}^ t\mathbb{P}(T^\epsilon_{i+1,i}=t_1)\mathbb{P}(T^\epsilon_{in}\leq  t-t_1)\\
	&\geq (p_{i,i-1}-\epsilon)\mathbb{P}(T_{i-1,n}\leq  t)+ p_{i,i}\mathbb{P}(T_{in}\leq  t) \\
	&\quad+ (p_{i,i+1}+\epsilon)\mathbb{P}(T_{i+1,n}\leq  t)\\
	&\geq\mathbb{P}(T_{in}\leq  t+1),
	\end{split}
	\end{align}
	where the first inequality follows from the induction hypothesis and the second inequality follows from the fact that $\mathbb{P}(T_{i+1,n}\leq  t)\geq\mathbb{P}(T_{i-1,n}\leq t)$ by Lemma~\ref{lemma:dominance_int}. Moreover, when $i=n-1$, we equate $\mathbb{P}(T_{i+1,n}^\epsilon\leq t)=1$ in the third line of~\eqref{eq:linefromiton}, and a similar argument follows. Finally, we have
	\begin{align*}
	\mathbb{P}(T^\epsilon_{1n}\leq \tau)&=\sum_{t_1=1}^{\tau}\mathbb{P}(T^\epsilon_{1i}=t_1)\mathbb{P}(T^\epsilon_{in}\leq \tau-t_1)\\
	&\geq\sum_{t_1=1}^{\tau}\mathbb{P}(T_{1i}=t_1)\mathbb{P}(T_{in}\leq \tau-t_1)\\
	&= \mathbb{P}(T_{1n}\leq \tau),
	\end{align*}
	which completes the proof.

\paragraph*{Necessary condition for optimality} We prove by contradiction. Without loss of generality, suppose $P^*$ is optimal and  $\mathbb{P}(T^*_{1n}\leq\tau)<\mathbb{P}(T^*_{n1}\leq \tau)$. By the monotonicity properties in the first part, for $i\in\{2,n-1\}$, if we increase $p_{i,i+1}^*$ and decrease $p_{i,i-1}^*$, then $\mathbb{P}(T^*_{1n}\leq\tau)$ increases and $\mathbb{P}(T^*_{n1}\leq \tau)$ decreases continuously, which leads to an increase in the objective function  $\min\{\mathbb{P}(T^*_{1n}\leq\tau),\mathbb{P}(T^*_{n1}\leq \tau)\}$. Therefore, the strategy $P^*$ is not the optimal, which is a contradiction. 
\end{proof}
\end{appendices}

\bibliographystyle{plainurl+ISBN}
\bibliography{alias,Main,FB}

\end{document}